\numberwithin{equation}{section}
\definecolor{myblue}{RGB}{0, 0, 0}
\numberwithin{equation}{section}
\theoremstyle{plain}% default
\newtheorem{theorem}{Theorem}[section]
\newtheorem{lemma}[theorem]{Lemma}
\newtheorem{proposition}[theorem]{Proposition}
\newtheorem{corollary}[theorem]{Corollary}
\theoremstyle{definition}
\newtheorem{definition}{Definition}[section]
\theoremstyle{remark}
\newcommand{\abs}[1]{\left\vert#1\right\vert}
\newcommand{\skp}[1]{\left\langle\,#1\,\right\rangle}
\newcommand{\spr}[1]{\left\langle\,#1\,\right\rangle}
\newcommand{\kl}[1]{\left(#1\right)}
\newcommand{\Kl}[1]{\left\{#1\right\}}
\definecolor{aog}{rgb}{0.0, 0.5, 0.0}
\newcommand{\R}{\mathbb{R}} 
\newcommand{\C}{\mathbb{C}}
\newcommand{\N}{\mathbb{N}}
\newcommand{\Z}{\mathbb{Z}}
\newcommand{\Q}{\mathbb{Q}}
\newcommand{\vphi}{\varphi}
\newcommand{\clg}{c_{\ell,g}}
\newcommand{\wjk}{w_{jk}}
\newcommand{\wjkl}{w_{jk,\ell}}
\newcommand{\wjklg}{w_{jk,\ell g}}
\newcommand{\D}{\mathcal{D}}
\newcommand{\sumg}{\sum_{g=1}^G}
\newcommand{\sumjk}{\sum_{j,k\in\Z}}
\newcommand{\NGS}{{\operatorname{NGS}}}
\newcommand{\LGS}{{\operatorname{LGS}}}
\newenvironment{customlegend}[1][]{
	\begingroup
	\csname pgfplots@init@cleared@structures\endcsname
	\pgfplotsset{#1}
}{
	\csname pgfplots@createlegend\endcsname
	\endgroup
}
\def\addlegendimage{\csname pgfplots@addlegendimage\endcsname}
\title{Singular Value and Frame Decomposition-based Reconstruction for Atmospheric Tomography}
\author{Lukas Weissinger\thanks{Johann Radon Institute Linz, Altenbergerstra{\ss}e 69, A-4040 Linz, Austria,
  (lukas.weissinger@ricam.oeaw.ac.at, simon.hubmer@ricam.oeaw.ac.at, bernadett.stadler@indmath.uni-linz.ac.at, ronny.ramlau@ricam.oeaw.ac.at).}
\and Simon Hubmer\footnotemark[1]
\and Bernadett Stadler\footnotemark[1]
\and Ronny Ramlau\footnotemark[1]~\thanks{Johannes Kepler University Linz, Institute of Industrial Mathematics, Altenbergerstra{\ss}e 69, A-4040 Linz, Austria, (ronny.ramlau@jku.at).}
}
\begin{document}
\maketitle
\begin{abstract}
Atmospheric tomography, the problem of reconstructing atmospheric turbulence profiles from wavefront sensor measurements, is an integral part of many adaptive optics systems used for enhancing the image quality of ground-based telescopes. Singular-value and frame decompositions of the underlying atmospheric tomography operator can reveal useful analytical information on this inverse problem, as well as serve as the basis of efficient numerical reconstruction algorithms. In this paper, we extend existing singular value decompositions to more realistic Sobolev settings including weighted inner products, and derive an explicit representation of a frame-based (approximate) solution operator. These investigations form the basis of efficient numerical solution methods, which we analyze via numerical simulations for the challenging, real-world Adaptive Optics system of the Extremely Large Telescope using the entirely MATLAB-based simulation tool MOST.
\end{abstract}

\noindent \textbf{Keywords.} Atmospheric Tomography, Singular Value Decomposition, Frame Decomposition, Adaptive Optics, Inverse and Ill-Posed Problems

%===========================================================================
%===========================================================================
% % % % % % % % % % % % % % % % % % % % % % %  
% % % % % % Section - Introduction  % % % % %
% % % % % % % % % % % % % % % % % % % % % % %  
\section{Introduction}\label{sec:introduction}

\begin{figure}[ht!]
	\centering
	\includegraphics[width=0.45\textwidth]{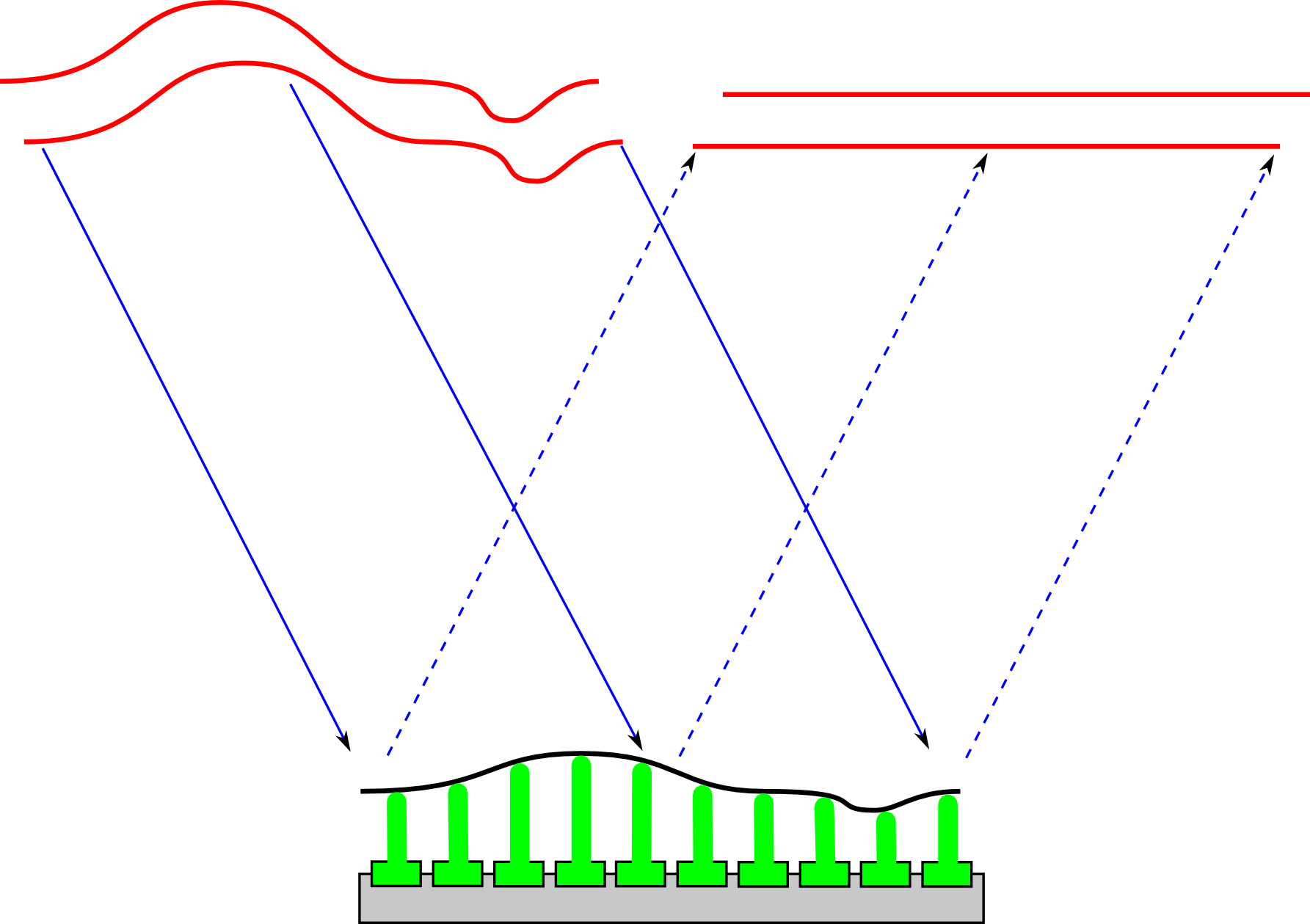}
	\qquad\qquad
	\includegraphics[width=0.40\textwidth, trim={6cm 0cm 6cm 0cm}, clip ]{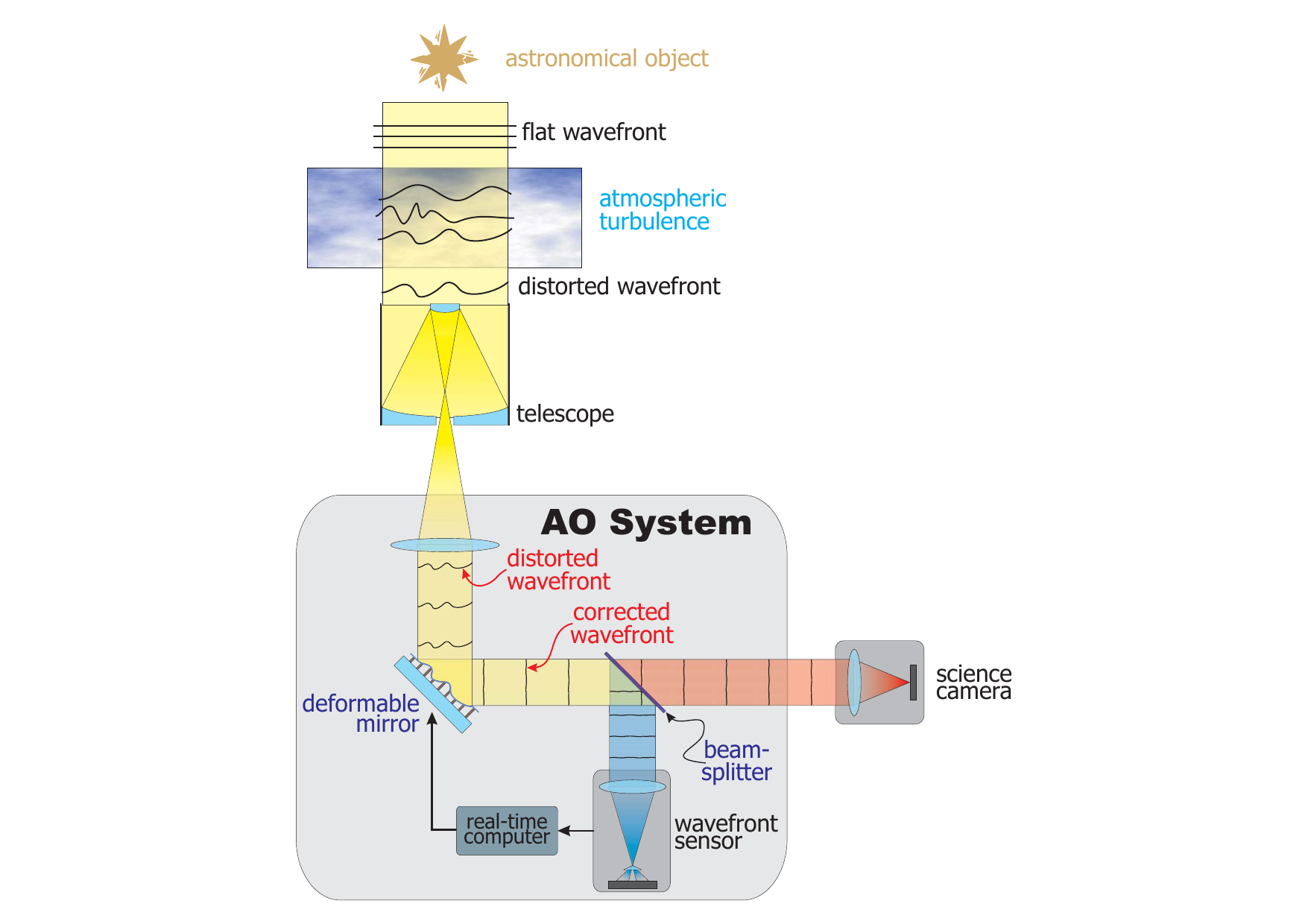}
	\caption{Schematic depiction of wavefront correction via deformable mirror (left, image from \cite{Auzinger_2017}) and working principle of a SCAO system (right, image from \cite{Egner_2006}).}
	\label{fig_AO_system}
\end{figure}

Adaptive Optics (AO) is an essential component of ground-based telescopes such as the Extremely Large Telescope (ELT) \cite{ELT_2020} of the European Southern Observatory (ESO), currently under construction in the Atacama desert in Chile. This is because temperature fluctuations in the atmosphere cause turbulence, which results in wavefront aberrations of the incoming light before it reaches the telescope. If not corrected by an AO system, these aberrations result in blurred images and therefore a severe loss of image quality. The general working principle of an AO system can be summarized as follows \cite{Roddier1999,Roggemann_Welsh_1996,Ellerbroek_Vogel_2009}: First, a wavefront sensor (WFS) is used to measure the wavefront aberration of the incoming light of some reference light source such as a natural guide stars (NGS). Then, a deformable mirror (DM) located in the light path is adjusted, such that after reflection from this DM, the incoming wavefront is approximately plane and thus aberration free; see Figure~\ref{fig_AO_system} (left). Since the atmosphere is rapidly changing, this measurement and correction cycle has to be repeated continuously at a frequency of about 500~Hz.

\begin{figure}[ht!]
	\centering
	\includegraphics[width=1\textwidth, clip, trim={0cm 0cm 0cm 0cm}]{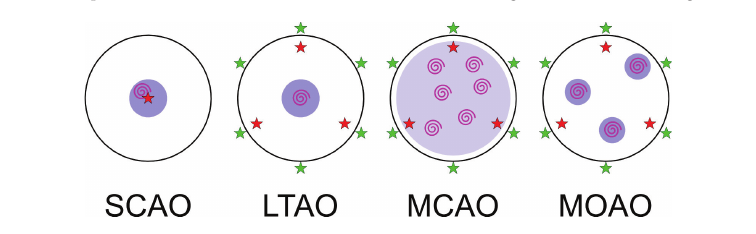}
	\caption{Schematic depiction of different types of AO systems. Magenta spirals represent astronomical objects of interest, while red and greens stars correspond to locations of NGS and LGS, respectively. The darker shaded areas correspond to the directions corrected for by the corresponding AO systems. Image taken from \cite{Auzinger_2017}.}
	\label{fig_AO_systems}
\end{figure}

The AO correction procedure outlined above forms the basis of so-called Single Conjugate Adaptive Optics (SCAO), which is used to observe astronomical objects in the near vicinity of a bright reference star; see Figure~\ref{fig_AO_system} (left) for a schematic depiction. However, if the distance between the object of interest and this NGS is too large, the image quality strongly deteriorates. This is due to the directional dependence of the wavefront aberration, caused by different atmospheric turbulence. Therefore, the WFS measurement in guide star direction is then no longer close to that of the observed object. A common remedy for this problem is to consider multiple guide stars, both NGS and artificial laser guide stars (LGS) created by powerful lasers in the sodium layer of the atmosphere. The incoming wavefront of each of those guide stars is measured by a separate WFS, from which one then aims to reconstruct the entire turbulence volume above the telescope. This is the atmospheric tomography problem considered in this paper. Once the atmospheric turbulence has been reconstructed, it is then possible to use one or several DMs to correct for objects with no guide star nearby, or obtain a correction over either a larger field of view (FoV) or in several view directions. These settings correspond to Laser Tomography Adaptive Optics (LTAO), Multiconjugate Adaptive Optics (MCAO), and Multiobject Adaptive Optics (MOAO), respectively, which are illustrated in Figure~\ref{fig_AO_systems}. For further details see e.g.\ \cite{Diolaiti_et_al_2016,Andersen_Eikenberry_Fletcher_Gardhuose_Leckie_Veran_Gavel_Clare_Jolissaint_Julian_Rambold_2006,Hammer_Sayede_Gendron_Fusco_Burgarella_Cayatte_Conan_Courbin_Flores_Guinouard_2002,Puech_Flores_Lehnert_Neichel_Fusco_Rosati_Cuby_Rousset_2008,Rigaut_Ellerbroek_Flicker_2000}.

\begin{figure}[ht!]
	\centering
	\includegraphics[width=0.45\textwidth]{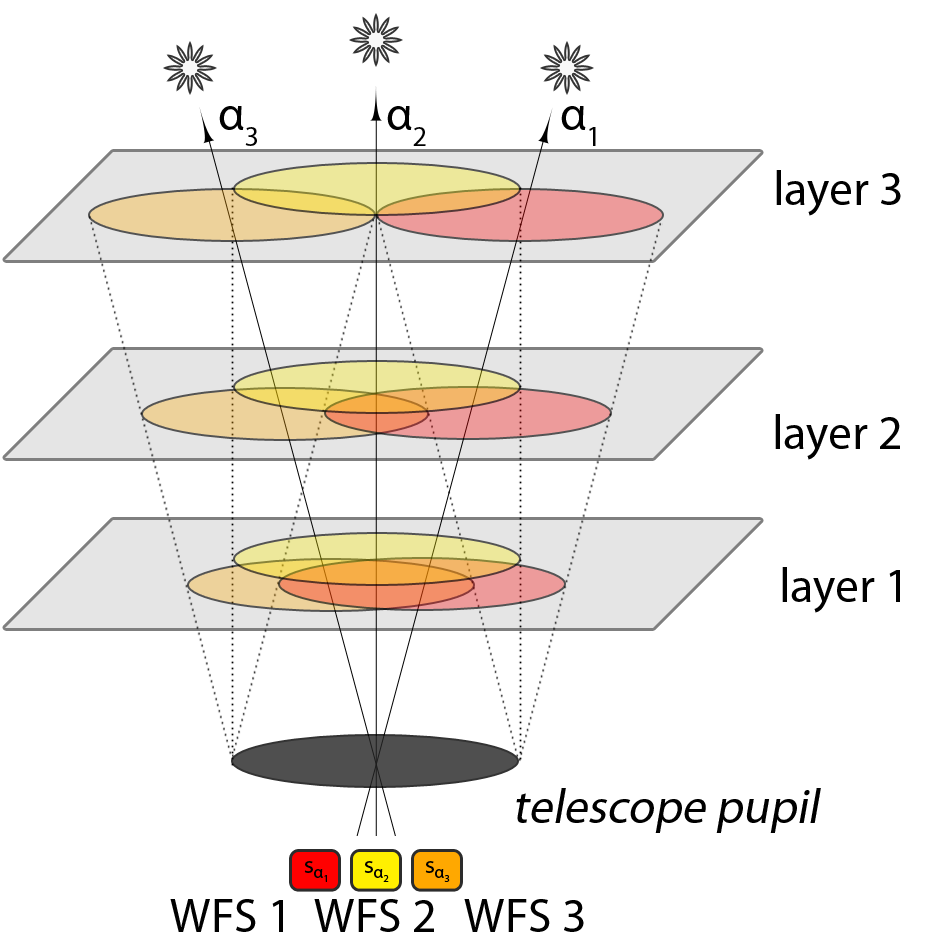}
	\qquad
	\includegraphics[width=0.45\textwidth]{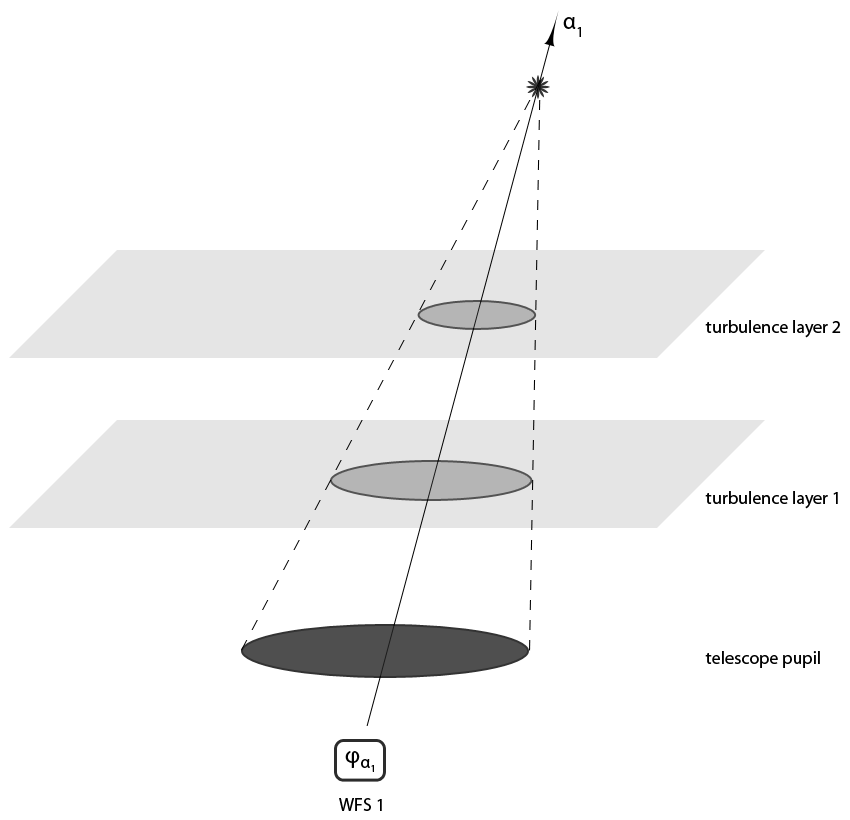}
	\caption{Illustration of the atmospheric tomography problem with three turbulence layers, NGSs and corresponding WFSs (left). Light stemming from a single LGS is influenced by the cone effect (right). Images taken from \cite{Yudytskiy_2014}.}
	\label{fig_AO_Tomo}
\end{figure}

In the form introduced above, atmospheric tomography falls into the category of limited-angle tomography problems: only a small number of guide stars is available ($6$ LGS for the ELT), and the NGS and LGS only have a small angle of separation ($1$ arcmin for MCAO and $3.5$ arcmin for MOAO). As such, it is severely ill-posed and thus practically infeasible without additional restrictions \cite{Davison_1983, Natterer_2001}. These come in the form of assumptions on the structure of the atmosphere, more precisely that it consists of a finite number of (infinitely) thin turbulent layers located at predefined heights. Atmospheric tomography then reduces to reconstructing the atmospheric turbulence profile on those finitely many layers from the given WFS measurements. An example of this problem for the case of three layers and three (natural) guide stars with corresponding WFSs is depicted in Figure~\ref{fig_AO_Tomo} (left).

The atmospheric tomography problem has attracted considerable attention in the past. Among the many proposed reconstruction methods we mention e.g.\ the minimum mean square error method \cite{Fusco_Conan_Rousset_Mugnier_Michau_2001}, the back-projection
algorithm of \cite{Gavel_2004}, conjugate gradient type
iterative reconstruction methods with suitable preconditioning \cite{Ellerbroek_Gilles_Vogel_2003,Gilles_Ellerbroek_Vogel_2003,Gilles_Ellerbroeck_2008,Yang_Vogel_Ellerbroek_2006,Vogel_Yang_2006}, the Fractal Iterative Method (FrIM) \cite{Tallon_TallonBosc_Bechet_Momey_Fradin_Thiebaut_2010,Tallon_Bechet_TallonBosc_Louarn_Thiebaut_Clare_Marchetti_2012,Thiebaut_Tallon_2010}, the Finite Element Wavelet Hybrid Algorithm (FEWHA) \cite{Yudytskiy_2014,Yudytskiy_Helin_Ramlau_2013,Yudytskiy_Helin_Ramlau_2014,Stadler2020,Stadler2021}, as well as a Kaczmarz iteration \cite{Ramlau_Rosensteiner_2012, Rosensteiner_Ramlau_2013}. For further methods as well as important practical considerations see also \cite{Ellerbroek_Gilles_Vogel_2002,Gilles_Ellerbroek_Vogel_2002,Gilles_Ellerbroek_Vogel_2007,Poettinger_Ramlau_Auzinger_2019,Raffetseder_Ramlau_Yudytskiy_2016,Ramlau_Obereder_Rosensteiner_Saxenhuber_2014,Saxenhuber_Ramlau_2016} and the references therein. In recent years FEWHA has already proven in simulations to provide an excellent reconstruction quality in real-time for the MORFEO instrument of the ELT \cite{Stadler2022}. A clever discretization strategy together with a matrix-free implementation and a small number of conjugate gradient iterations makes reconstructions in real-time possible and enables on the fly parameter updates.

While these methods have all been studied in detail both analytically and numerically, they do not yield much new knowledge about the atmospheric tomography problem itself. This should be contrasted with the (limited-angle) tomography operator, from which the atmospheric tomography operator is derived \cite{Ellerbroek_Vogel_2009,Fusco_Conan_Rousset_Mugnier_Michau_2001}. There, a large number of theoretical results offer insight into the structure and ill-posedness of the classic tomography problem \cite{Natterer_2001}. Many of these are directly related to the availability of a singular value decomposition (SVD) of the (limited-angle) Radon transform \cite{Davison_1983,Natterer_2001}. Motivated by this, a singular value-type decomposition (SVTD) of the atmospheric tomography operator has recently been derived in \cite{Neubauer_Ramlau_2017}, and has provided the first theoretical insights into the ill-posedness of the atmospheric tomography problem. However, this SVTD is only valid on a square telescope aperture in a NGS-only setting, which limits its practical applicability. This motivated the derivation of a frame decomposition (FD) in \cite{Hubmer_Ramlau_2020}, which provided an SVD-like decomposition of the atmospheric tomography operator valid for general aperture shapes and a mixture of both NGS and LGS. The main drawback of this approach is that it only provides an approximate solution to the problem; see also \cite{Weissinger2021}. Note that in \cite{Hubmer_Ramlau_2020} an SVTD was also derived for square apertures and a LGS-only setting. Further analytic properties including the non-uniqueness of the atmospheric tomography problem have recently been considered in \cite{Ramlau_Stadler_2024}.

In this paper, we aim to advance the study of the atmospheric tomography problem in two ways: First, we extend the previously derived SVTDs in the NGS-only and LGS-only cases to a more physically realistic setting. In particular, we consider real-order Sobolev spaces for the definition space of the operator, which is motivated by the Kolmogorov turbulence model for the atmospheric layers \cite{Kolmogorov}. Furthermore, we incorporate commonly used turbulence profiles into the SVTDs by replacing the standard inner products with correspondingly weighted version. Furthermore, we consider a split-tomography approach to solve the atmospheric tomography problem in the mixed NGS/LGS problem using the derived SVTDs. Secondly, for the FD of the atmospheric tomography operator, we derive an explicit representation of the involved dual frame functions, which previously had to be computed numerically. This in turn leads to an explicit representation of the (approximate) frame inverse, which allows for a highly efficient implementation. The final contribution of this paper is a numerical comparison of the SVTDs and FD with state-of-the-art reconstruction algorithms in a realistic adaptive optics simulation environment.

The outline of this paper is as follows: In Section~\ref{sec:tomo}, we recall the definition and some basic properties of the atmospheric tomography operator. In Section~\ref{sec:svtd}, we then derive a singular value decomposition of this operator in a realistic Sobolev space setting including general weighted inner products incorporating turbulence profiles. In Section~\ref{sec:fd}, we then consider a frame decomposition of the atmospheric tomography operator, and derive an explicit representation of the involved dual frame functions and the corresponding (approximate) solution operator. Finally, in Section~\ref{sec:fd}, we numerically test the resulting reconstruction methods in a realistic environment using the adaptive optics simulation tool MOST, and compare the results to those obtained with two other state-of-the-art reconstruction algorithms.

% % % % % % % % % % % % % % % % % % % % % % % % %
% Section - The Atmospheric Tomography Operator %
% % % % % % % % % % % % % % % % % % % % % % % % %
\section{The Atmospheric Tomography Operator}\label{sec:tomo}

In this section, we recall the definition and some basic properties of the atmospheric tomography operator, which has originally been derived from the (limited-angle) Radon transform using the layered structure of the atmospheric turbulence \cite{Ellerbroek_Vogel_2009,Fusco_Conan_Rousset_Mugnier_Michau_2001}.

First, let the domain $\Omega_A \subset \R^2$ represent the telescope aperture, which typically (but not always) is a circular and symmetric domain centered around the origin. Furthermore, assume that there are $L$ atmospheric layers, i.e., planes parallel to the aperture $\Omega_A$, located at distinct heights $h_{\ell} \in \R_0^+$ for $\ell = 1 \,, \dots \,, L$. We assume that the heights are given in ascending order and note that typically $h_1 = 0$. Note that all 2-dimensional domains defined here are embedded in $\R^3$ by fixing the z-coordinate. Next, consider $G$ different guide stars with corresponding direction vectors $\alpha_g = (\alpha_g^x,\alpha_g^y) \in \R^2$ for $g = 1\ ,,\dots \,, G$. The vectors $\alpha_g$ are such that seen from the center of the telescope aperture, the vectors $(\alpha_g^x,\alpha_g^y,1) \in \R^3$ point directly at the corresponding guide stars. Now, assume that the first $G_\NGS$ guide stars are NGS, while the remaining $G_\LGS$ guide stars are LGS, such that $G= G_\NGS + G_\LGS$. Then we can define the coefficients
    \begin{equation*}
        c_{\ell,g}=
        \begin{cases}
        1\,,\quad & g\in\{1\,,\dots\,,G_{\NGS}\} \,,
        \\ 1- h_\ell/h_{\LGS} \,, \quad & g\in\{G_{\NGS}+1\,,\dots\,,G\} \,,
        \end{cases}
    \end{equation*}
where $h_\LGS$ denotes the height of the sodium layer in the atmosphere in which LGS are created (approximately $90$ km). The coefficients $c_{\ell,g}$ model the cone effect for LGS as illustrated in Figure~\ref{fig_AO_Tomo} (right). Note that since $h_\ell < h_{LGS}$ for all $\ell=1,\ldots,L$, we have $c_{\ell,g} \in (0,1]$. Next, we introduce the domains 
    \begin{equation*}\label{Omegaelldef}
        \Omega_{\ell}:=\bigcup_{g=1}^G \Omega_A(\alpha_gh_\ell) \subset \R^2 \,,
        \qquad \forall \, \ell = 1\,,\dots \,, L \,,
    \end{equation*}
where
    \begin{equation*}
        \Omega_A(\alpha_gh_\ell):=\left\{r\in\mathbb{R}^2 \, \Big\vert \, \frac{r-\alpha_gh_\ell}{c_{\ell,g}}\in\Omega_A\right\} \,.
    \end{equation*}
The domains $\Omega_{\ell}$ are exactly those parts of the atmospheric layers which are ``seen'' by the WFSs. Consequently, these are also the only parts of the atmosphere which one can hope to reconstruct accurately. In the example shown in Figure~\ref{fig_AO_Tomo} (left), the domains $\Omega_l$ correspond to the union of the coloured areas; see also Figure~\ref{fig:domains}.

%%% Figure %%%
\def\centercircle{(0,0) circle (2cm)}
\def\firstcircle{(30:1cm) circle (1.6cm)}
\def\secondcircle{(90:1cm) circle (1.6cm)}
\def\thirdcircle{(150:1cm) circle (1.6cm)}
\def\fourthcircle{(210:1cm) circle (1.6cm)}
\def\fifthcircle{(270:1cm) circle (1.6cm)}
\def\sixthcircle{(330:1cm) circle (1.6cm)}
\def\quadra{(-2.8,2.8) rectangle (2.8,-2.8)}
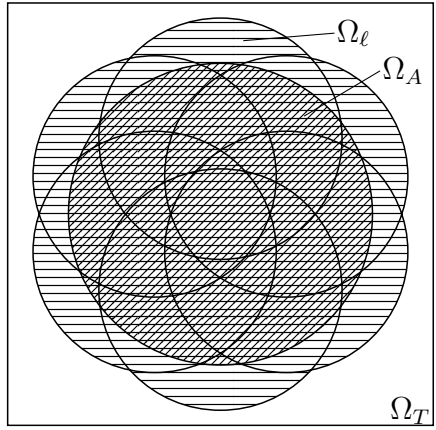
\begin{figure}[ht!]
    \centering 
\begin{tikzpicture}
 \begin{scope}
            \clip \centercircle \quadra;
        \fill[pattern=horizontal lines] \firstcircle \secondcircle \thirdcircle \fourthcircle \fifthcircle \sixthcircle;
        \fill[pattern=north east lines] \centercircle;
\end{scope}
\draw[line width=0.2mm]\centercircle\firstcircle\secondcircle\thirdcircle\fourthcircle\fifthcircle\sixthcircle\quadra;
\node at (2.5,-2.6) {$\Omega_T$};
\draw (1.1,1.3) -- (2.1,1.9);
\node at (2.4,1.9) {$\Omega_A$};
\draw (0.3,2.3) -- (1.5,2.4);
\node at (1.75,2.4) {$\Omega_\ell$};
\label{fig:}
\end{tikzpicture}
    \caption{A schematic drawing of the domains $\Omega_A$, $\Omega_\ell,$ and $\Omega_T$.}
    \label{fig:domains}
\end{figure}
%%% End Figure %%%

We now want to define the atmospheric tomography operator between suitable Lebesgue spaces. For this, we introduce the space $L_2(\Omega,\gamma)$ with $\gamma > 0$, by which we simply mean the classic Lebesgue space $L_2(\Omega)$ equipped with the scaled inner product 
    \begin{equation*}
        \spr{ u,v }_{L_2(\Omega,\gamma)} :=\frac{1}{\gamma}\int_{\Omega} u(r)\overline{v(r)}\mathrm{d}r \,.
    \end{equation*}
Let $\phi=(\phi_\ell)_{\ell=1,\ldots,L}$ denote the refractive index variations (a dimensionless quantity), related to temperature fluctuations within the atmosphere, causing atmospheric turbulence, and let $\varphi=(\varphi_g)_{g=1,\ldots,G}$ denote the incoming wavefronts as reconstructed by the WFSs. Then for given turbulence weights $(\gamma_\ell)_{\ell=1,\dots,L}$ the atmospheric tomography operator can be defined as \cite{Fusco_Conan_Rousset_Mugnier_Michau_2001}
    \begin{equation}\label{defA}
    \begin{aligned}
         &A:\mathcal{D}(A):=\prod_{\ell=1}^L L_2(\Omega_{\ell},\gamma_\ell)\to L_2(\Omega_A)^G, \qquad \phi = (\phi_{\ell})_{\ell=1}^L \mapsto \varphi =\kl{(A_g \phi) }_{g=1}^G\,,
        \\  
        &\qquad(A_g \phi)(r) := \sum_{\ell=1}^L\phi_l(c_{\ell,g} r + \, \alpha_g h_{\ell}) \,.
    \end{aligned}
    \end{equation}
Note that the product spaces above are equipped with the canonical inner products
    \begin{equation*}\label{innerproducts}
        \spr{ \phi,\psi }_{\prod_{\ell=1}^L L_2(\Omega_\ell,\gamma_\ell)}
        =\sum_{\ell=1}^L \spr{ \phi_\ell,\psi_\ell }_{L_2(\Omega_\ell,\gamma_\ell)} 
        =\sum_{\ell=1}^L \frac{1}{\gamma_\ell} \spr{ \phi_\ell,\psi_\ell }_{L_2(\Omega_\ell)}\,,
    \end{equation*}
and
    \begin{equation*}
        \spr{ \varphi,\theta }_{L_2(\Omega_A)^G}=\sum_{g=1}^G \spr{ \varphi_g,\theta_g }_{L_2(\Omega_A)} \,.
    \end{equation*}
The atmospheric tomography operator $A$ as defined in \eqref{defA} essentially only sums up the contributions of each turbulence layer in the direction of the guide stars. The weights $\gamma_\ell$ are used to place a higher emphasis on layers on which a strong turbulence is expected, and are assumed to satisfy $\sum_{\ell=1}^L \gamma_\ell=1$. In practice, they are known quantities derived from previously measured turbulence profiles. Note that from a mathematical point of view, one may also consider non-constant (i.e.\ spatially varying) weights $\gamma_\ell$, thereby placing a varying emphasis on certain areas within the atmospheric layers. An example are the piecewise constant weights introduced in \cite{Saxenhuber_Ramlau_2016}, which are also briefly discussed at the end of Section~\ref{sec:fd}. Since they are not commonly used in practice, we do not consider such spatially varying weights here, but note that our subsequent analysis could be extended to include them as well.

For further use, we also require the adjoint of the atmospheric tomography operator $A$, which following \cite[Proposition 2]{Ramlau_Rosensteiner_2012} can be derived to be
    \begin{equation}\label{Adjoint}
    \begin{split}
        (A^\ast \varphi)(r) & = 
        \sum_{g=1}^G (A_g^* \vphi)(r) 
        =
        \sum_{g=1}^G\kl{\frac{\gamma_\ell}{c_{\ell,g}^2}\varphi_g\left(\frac{r-\alpha_g h_\ell}{c_{\ell,g}}\right)I_{\Omega_A(\alpha_gh_\ell)}(r) }_{l=1}^L \,,
    \end{split}
    \end{equation}
where $I_{\Omega_A(\alpha_gh_\ell)}$ denotes the indicator function of the domain $\Omega_A(\alpha_gh_\ell)$.
It was shown in \cite{Neubauer_Ramlau_2017} that the atmospheric tomography operator $A$ is not compact, and thus an SVD does not necessarily need to exist. Nevertheless, an SVTD can be derived for a ``periodic'' atmospheric tomography operator on a square domain, which we now introduce in a general Sobolev setting. For this, let $\Omega_T=[-T,T]^2 \subset \R^2$ be a square domain with $T$ chosen sufficiently large such that, cf.~Figure~\ref{fig:domains}, 
    \begin{equation}\label{Tbed}
        \Omega_A+\alpha_g h_\ell \subset c_{\ell} \Omega_T\,, \qquad \forall \, g=1, \ldots, G\,, \quad \forall \, \ell=1, \ldots, L \,,
    \end{equation}
where 
    \begin{equation*}
        c_{\ell} := \min_{g=1,\ldots,G}\Kl{c_{\ell,g}}\,, \qquad \forall \, \ell=1, \ldots, L \,.
    \end{equation*}
Then, we introduce the \emph{periodic} Sobolev spaces $H^s\left(c_{\ell} \Omega_T,\gamma_\ell\right)$ via the inner product 
    \begin{equation}\label{sob_norm}
        \spr{u,v}_{H^s(c_{\ell}\Omega_T,\gamma_\ell)}:=\sum_{j,k\in\Z}\left(1+\beta_{\ell,T}|(j,k)|^2\right)^{s}u_{jk,\ell }\overline{v_{jk,\ell }} \,,
        \qquad 
        \beta_{\ell,T} = \pi^2(c_{\ell} T)^{-2}\,,
    \end{equation}
where $u_{jk,\ell}:=\skp{u,w_{jk,\ell}}_{L_2(c_{\ell} \Omega_T,\gamma_\ell)}$ and $v_{jk,\ell}:=\skp{v,w_{jk,\ell }}_{L_2(c_{\ell} \Omega_T,\gamma_\ell)}$ with
    \begin{equation}\label{def_wjk_wjkl}
        w_{jk,\ell}(x,y) :=\frac{\gamma_\ell^{1/2}}{c_{\ell}}w_{jk}((x,y)/c_{\ell})\,,
        \quad \text{and} \quad 
        w_{jk}(x,y):=\frac{1}{2T}e^{i\omega(jx+ky)} \,, \quad \omega=\frac{\pi}{T} \,.
    \end{equation}
The functions $w_{jk}$ and $w_{jk,\ell}$ defined in \eqref{def_wjk_wjkl} form orthonormal bases over the spaces $L_2(\Omega_T)$ and $L_2(c_{\ell} \Omega_T,\gamma_{\ell})$, respectively. Note that our definition of $H^s\left(c_{\ell} \Omega_T,\gamma_\ell\right)$ amounts to the classic Fourier-series definition of periodic Sobolev spaces (see e.g.~\cite{Hubmer_Sherina_Ramlau_2023,Ramlau_Teschke_2004_1}), adapted to our specific scaled domains $c_{\ell} \Omega_T$ and incorporating the turbulence weights $\gamma_{\ell}$. For integer order $s \in \N$, the inner product \eqref{sob_norm} is equivalent to the classic Sobolev space inner product \cite[Proposition~7.2]{Hubmer_Sherina_Ramlau_2023}, while for general $s\in\R$ equivalence holds over the Triebel spaces, which include zero boundary conditions \cite{Natterer_2001}. With this, the periodic atmospheric tomography operator is now defined as
    \begin{equation*}
    \begin{split}
        &\Tilde{A}^{(s)}: \prod_{\ell=1}^L H^s\left(c_\ell \Omega_T,\gamma_\ell\right) \rightarrow L_2\left(\Omega_T\right)^G \,, \qquad \phi = (\phi_{\ell})_{\ell=1}^L \mapsto \varphi =\kl{(\Tilde{A}^{(s)}_g \phi) }_{g=1}^G
        \\
        &\qquad(\Tilde{A}^{(s)}_g \phi) (r):= \sum_{\ell=1}^L\phi_{\ell}(c_{\ell,g} r + \, \alpha_g h_{\ell})  \,,
    \end{split}
    \end{equation*}
with $0\leq s \in \R$.
The compactness of this operator now depends on the specific choice of $s$. If $s=0$, then as above it can be shown that $\Tilde{A}^{(0)}$ is not compact, and thus an SVD does not necessarily need to exist. However, if $s > 0$, then the compactness of the Sobolev embedding operator on the bounded domain $\Omega_T$ implies that $\Tilde{A}^{(s)}$ is compact, and thus an SVD exists \cite{Engl_Hanke_Neubauer_1996}. In the specific case of $s=0$ and $\gamma_{\ell} = 1$ for all $\ell=1,\ldots,L$, SVTDs for the NGS-only and LGS-only case have been derived in \cite{Neubauer_Ramlau_2017} and \cite{Hubmer_Ramlau_2020}, respectively. As we shall see below, these leverage algebraic properties of the scaled exponential functions $w_{jk}$. In particular, it was shown in \cite[Theorem~5.1]{Neubauer_Ramlau_2017} that if
    \begin{equation}\label{cond_wellposed}
        \frac{\alpha_g^x h_{\ell}}{T} \in \Q \,, 
        \quad \text{and} \quad 
        \frac{\alpha_g^x h_{\ell}}{T} \in \Q \,,
        \qquad
        \forall \, g = 1,\ldots,G \,, \quad \forall \, \ell = 1, \ldots ,L \,,
    \end{equation}
then the pseudo-inverse $(\Tilde{A}^{(0)})^\dagger$ is bounded. Hence, in this setting the (periodic) atmospheric tomography problem is well-posed. Furthermore, one can find examples violating \eqref{cond_wellposed} which lead to unboundedness of $(\Tilde{A}^{(0)})^\dagger$. On the other hand, for $s > 0$ the compactness of $\Tilde{A}^{(s)}$ implies that $(\Tilde{A}^{(s)})^\dagger$ is always unbounded, and thus the (periodic) atmospheric tomography problem is always ill-posed in this case.

% % % % % % % % % % % % % % % % % % % % % % %  
% % % % % % Section - SVD % % % % % % % % % %
% % % % % % % % % % % % % % % % % % % % % % %  
\section{Singular Value Type Decompositions}\label{sec:svtd}

In this section, we derive an SVTD for the periodic atmospheric tomography operator $\Tilde{A}^{(s)}$ for the case of either NGS-only or LGS-only. In this case, $c_{\ell,g} = c_{\ell}$ and thus
    \begin{equation}\label{defAperonly}
    \begin{split}
        &\Tilde{A}^{(s)}: \D(\Tilde{A}^{(s)}) := \prod_{\ell=1}^L H^s\left(c_\ell \Omega_T,\gamma_\ell\right) \rightarrow L_2\left(\Omega_T\right)^G \,,\quad \phi = (\phi_{\ell})_{\ell=1}^L \mapsto \varphi =\kl{(\Tilde{A}^{(s)}_g \phi) }_{g=1}^G,
        \\
        & \qquad(\Tilde{A}^{(s)}_g \phi)(r)= \sum_{\ell=1}^L\phi_{\ell}(c_{\ell} r + \, \alpha_g h_{\ell})  \,.
    \end{split}
    \end{equation}
The upcoming analysis closely follows ideas from \cite{Neubauer_Ramlau_2017,Hubmer_Ramlau_2020}, and is based on the fact that the functions $w_{jk}$ and $w_{jk,\ell}$ defined in \eqref{def_wjk_wjkl} form orthonormal bases over the spaces $L_2(\Omega_T)$ and $L_2(c_{\ell} \Omega_T,\gamma_{\ell})$, respectively. This implies the following result.

\begin{lemma}
Let $s\geq 0$ and $\ell \in \{ 1, \ldots , L \}$ be arbitrary but fixed. Then the functions 
    \begin{equation}\label{vjkl}
        w^{(s)}_{jk,\ell}:=\left(1+\beta_{\ell,T}|(j,k)|^2\right)^{-s/2} w_{jk,\ell} \,,
    \end{equation}
with $w_{jk,\ell}$ as in \eqref{def_wjk_wjkl} form an orthonormal basis for the Sobolev-space $H^s(c_\ell\Omega_T,\gamma_\ell)$.
\end{lemma}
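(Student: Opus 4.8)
The plan is to verify directly the two defining properties of an orthonormal basis with respect to the inner product \eqref{sob_norm}: orthonormality of the family $\{w^{(s)}_{jk,\ell}\}_{j,k\in\Z}$, and completeness. Both follow from a single observation, namely that since the $w_{jk,\ell}$ form an orthonormal basis of $L_2(c_\ell\Omega_T,\gamma_\ell)$, the $L_2$-Fourier coefficients of the scaled functions defined in \eqref{vjkl} are
\[
    \spr{w^{(s)}_{jk,\ell},w_{mn,\ell}}_{L_2(c_\ell\Omega_T,\gamma_\ell)} = \left(1+\beta_{\ell,T}|(j,k)|^2\right)^{-s/2}\delta_{(j,k),(m,n)} \,.
\]

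For orthonormality, I would substitute this identity into the definition \eqref{sob_norm} of the $H^s$-inner product applied to $u=w^{(s)}_{jk,\ell}$ and $v=w^{(s)}_{mn,\ell}$. The resulting double sum over the indices collapses to the single term with index $(j,k)=(m,n)$, and the weight factor $\left(1+\beta_{\ell,T}|(j,k)|^2\right)^{s}$ coming from \eqref{sob_norm} cancels exactly against the two factors $\left(1+\beta_{\ell,T}|(j,k)|^2\right)^{-s/2}$ carried by the coefficients, leaving $\delta_{(j,k),(m,n)}$. This is a short computation with no real obstacle, as the cancellation is forced by the way the exponent $-s/2$ in \eqref{vjkl} was chosen.

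For completeness, I would compute, for an arbitrary $u\in H^s(c_\ell\Omega_T,\gamma_\ell)$, the $H^s$-expansion coefficient $\spr{u,w^{(s)}_{jk,\ell}}_{H^s(c_\ell\Omega_T,\gamma_\ell)}$. Using \eqref{sob_norm} together with the coefficient identity above, this equals $\left(1+\beta_{\ell,T}|(j,k)|^2\right)^{s/2}u_{jk,\ell}$. Consequently,
\[
    \sum_{j,k\in\Z}\left|\spr{u,w^{(s)}_{jk,\ell}}_{H^s(c_\ell\Omega_T,\gamma_\ell)}\right|^2 = \sum_{j,k\in\Z}\left(1+\beta_{\ell,T}|(j,k)|^2\right)^{s}|u_{jk,\ell}|^2 = \norm{u}_{H^s(c_\ell\Omega_T,\gamma_\ell)}^2 \,,
\]
which is precisely Parseval's identity. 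Together with the orthonormality established above, Parseval's identity guarantees that $\{w^{(s)}_{jk,\ell}\}$ is a complete orthonormal system, hence an orthonormal basis. Equivalently, one may argue that vanishing of all coefficients $\spr{u,w^{(s)}_{jk,\ell}}_{H^s(c_\ell\Omega_T,\gamma_\ell)}$ forces $u_{jk,\ell}=0$ for every $(j,k)$ (since the weights are strictly positive), and hence $u=0$, because the $w_{jk,\ell}$ already span $L_2(c_\ell\Omega_T,\gamma_\ell)$.

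I expect no serious obstacle here: the statement is essentially the standard fact that, under the isometric isomorphism $u\mapsto \left(\left(1+\beta_{\ell,T}|(j,k)|^2\right)^{s/2}u_{jk,\ell}\right)_{j,k\in\Z}$ identifying $H^s(c_\ell\Omega_T,\gamma_\ell)$ with $\ell_2(\Z^2)$, the family $\{w^{(s)}_{jk,\ell}\}$ is mapped onto the canonical unit-vector basis. The only point requiring a little care is to ensure completeness is argued at the level of the $H^s$-norm rather than only the $L_2$-norm; the Parseval computation above handles this cleanly, since the definition \eqref{sob_norm} of the $H^s$-inner product was tailored exactly so that the weighted coefficient sequence of any $u\in H^s(c_\ell\Omega_T,\gamma_\ell)$ lies in $\ell_2$.
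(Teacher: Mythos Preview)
Your proposal is correct and follows essentially the same approach as the paper. The orthonormality argument is identical; for completeness, you argue via Parseval's identity (equivalently, totality), while the paper instead starts from the $L_2$-expansion $u=\sum_{j,k}\spr{u,w_{jk,\ell}}_{L_2}w_{jk,\ell}$ and rewrites it as $u=\sum_{j,k}\spr{u,w^{(s)}_{jk,\ell}}_{H^s}w^{(s)}_{jk,\ell}$ using the same coefficient identity you derived --- these are equivalent standard routes to the same conclusion.
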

\begin{proof}
Let $s\geq 0$ and $\ell \in \{ 1, \ldots , L \}$ be arbitrary but fixed. Since the functions $w_{jk,\ell}$ form an orthonormal basis of $L_2(c_{\ell}\Omega_T,\gamma_{\ell})$, it follows with \eqref{sob_norm} that
    \begin{equation*}
    \begin{aligned}
        &\skp{w^{(s)}_{j'k',\ell},w^{(s)}_{j''k'',\ell}}_{H^s(c_\ell\Omega_T,\gamma_\ell)}
        \\
        & \qquad \overset{\eqref{sob_norm}}{=} 
        \sum_{j,k\in\Z}\left(1+\frac{\pi^2|(j,k)|^2}{(c_{\ell} T)^2}\right)^{s}\skp{w^{(s)}_{j'k',\ell},w_{jk,\ell}}_{L_2(c_{\ell} \Omega_T,\gamma_\ell)}\overline{\skp{w^{(s)}_{j''k'',\ell},w_{jk,\ell }}_{L_2(c_{\ell} \Omega_T,\gamma_\ell)}} 
        \\
        & \qquad \overset{\eqref{vjkl}}{=} 
        \sum_{j,k\in\Z} \skp{w_{j'k',\ell},w_{jk,\ell}}_{L_2(c_{\ell} \Omega_T,\gamma_\ell)}\overline{\skp{w_{j''k'',\ell},w_{jk,\ell }}_{L_2(c_{\ell} \Omega_T,\gamma_\ell)}}
        = 
        \delta_{j',j''} \delta_{k',k''} \,,
    \end{aligned}
    \end{equation*}
and thus the functions $w_{jk,\ell}^{(s)}$ are orthonormal in $H^s(c_\ell\Omega_T,\gamma_\ell)$. In order to show that they are also a basis, note that $H^s(c_\ell\Omega_T,\gamma_\ell)\subset L_2(c_\ell\Omega_T,\gamma_\ell)$, and thus for each $u \in H^s(c_\ell\Omega_T,\gamma_\ell)$ there holds
    \begin{equation*}
    \begin{aligned}
        u&=\sum_{j,k\in \Z}\skp{u,w_{jk,\ell}}_{L_2(c_\ell\Omega_T,\gamma_\ell)}w_{jk,\ell}
        \\
        &\overset{\eqref{sob_norm}}{=}\sum_{j,k\in \Z}\left(1+\beta_{\ell,T}|(j',k')|^2\right)^{-s/2}\skp{u,w^{(s)}_{jk,\ell}}_{H^s(c_\ell\Omega_T,\gamma_\ell)}w_{jk,\ell}
        \\
        &\overset{\eqref{vjkl}}{=}\sum_{j,k\in \Z}\skp{u,w^{(s)}_{jk,\ell}}_{H^s(c_    \ell\Omega_T,\gamma_\ell)}w^{(s)}_{jk,\ell} \,,
    \end{aligned}
    \end{equation*}
which yields completeness of $\{w^{(s)}_{jk,\ell} \}_{j,k\in\Z}$ in $H^s(c_\ell\Omega_T,\gamma_\ell)$ and concludes the proof. 
\end{proof}

Due to the above result, every $\phi_\ell\in H^s(c_\ell\Omega_T,\gamma_\ell)$ can be written in the form
    \begin{equation}\label{phiexpansion}
        \phi_\ell=\sum_{j,k\in\mathbb{Z}}\phi_{jk,\ell} \, w^{(s)}_{jk,\ell} \,,
        \qquad \text{where} \qquad 
        \phi_{jk,\ell} := \spr{\phi_\ell,w^{(s)}_{jk,\ell} }_{H^s(c_\ell\Omega_T,\gamma_\ell)} \,,
    \end{equation}
and thus for an arbitrary turbulence $\phi=(\phi_\ell)_{\ell=1}^L\in \D(\Tilde{A}^{(s)}) $ there holds
    \begin{equation*}
        \phi = \kl{ \sum_{j,k\in\mathbb{Z}}\phi_{jk,\ell} \, w^{(s)}_{jk,\ell} }_{\ell = 1}^L \,.
    \end{equation*}
Collecting the coefficients $\phi_{jk,\ell}$ into vectors $\phi_{jk}:=(\phi_{jk,1},\ldots,\phi_{jk,L}) \in\mathbb{C}^L$ we obtain

\begin{proposition}\label{matrixdecompA}
Let the periodic atmospheric tomography operator $\tilde{A}^{(s)}$ be defined as in \eqref{defAperonly}, let $w_{jk,\ell}^{(s)}$ be as in \eqref{vjkl}, and let the matrices $\tilde{A}_{jk}\in\mathbb{C}^{G\times L}$ be defined as
    \begin{equation}\label{Ajkdef}
        \tilde{A}^{(s)}_{jk} := \kl{ (2T)w^{(s)}_{jk,\ell}(\alpha_g^x h_\ell,\alpha_g^y h_\ell) }_{g,\ell=1}^{G,L} \,.
    \end{equation}
Then for all $\phi\in\D(\Tilde{A}^{(s)}) $ there holds
    \begin{equation}\label{eq_As_dec}
        (\tilde{A}^{(s)}\phi)(x,y)=\sum_{j,k\in\mathbb{Z}}\big(\tilde{A}^{(s)}_{jk}\phi_{jk}\big)w_{jk}(x,y) \,.
    \end{equation}
\end{proposition}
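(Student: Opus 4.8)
The plan is to reduce the claim to the action of $\tilde{A}^{(s)}$ on the individual basis functions. By the preceding Lemma and the expansion \eqref{phiexpansion}, every $\phi = (\phi_\ell)_{\ell=1}^L \in \D(\tilde{A}^{(s)})$ is represented as a series in the orthonormal basis functions $w^{(s)}_{jk,\ell}$ sitting in the $\ell$-th slot of the product space. Since $\tilde{A}^{(s)}$ is a bounded linear operator from $\D(\tilde{A}^{(s)})$ into $L_2(\Omega_T)^G$ for every $s\geq 0$ (the forward map is a composition of $L_2$-bounded scalings and shifts), it commutes with the $H^s$-convergent series. Hence it suffices to evaluate each guide-star component $\tilde{A}^{(s)}_g$ on a single $w^{(s)}_{jk,\ell}$ and then resum.

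The crux is an exact \emph{shift--scaling identity}. First I would insert the definitions \eqref{vjkl} and \eqref{def_wjk_wjkl} into $w^{(s)}_{jk,\ell}(c_\ell r + \alpha_g h_\ell)$. Because $w_{jk}$ is a scaled exponential, the affine argument factorizes: the division by $c_\ell$ in the argument of $w_{jk}$ cancels the multiplication $c_\ell r$, reducing the argument to $r + \alpha_g h_\ell/c_\ell$, and the exponential then splits as $e^{i\omega(jx+ky)}\cdot e^{i\omega(j\alpha_g^x h_\ell/c_\ell + k\alpha_g^y h_\ell/c_\ell)}$. Rewriting each factor through the definitions of $w_{jk}$ and $w_{jk,\ell}$, and carefully tracking the normalization constant $1/(2T)$ (which is precisely why the factor $2T$ appears in \eqref{Ajkdef}), I expect to reach
    \begin{equation*}
        w^{(s)}_{jk,\ell}(c_\ell r + \alpha_g h_\ell) = \big(\tilde{A}^{(s)}_{jk}\big)_{g,\ell}\, w_{jk}(r) \,,
    \end{equation*}
which cleanly separates the spatial dependence (carried entirely by $w_{jk}(r)$) from the geometric guide-star/layer data (carried by the matrix entry $(2T)w^{(s)}_{jk,\ell}(\alpha_g^x h_\ell,\alpha_g^y h_\ell)$).

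With this identity in hand the assembly is routine: inserting the expansion into $(\tilde{A}^{(s)}_g\phi)(r)=\sum_{\ell=1}^L \phi_\ell(c_\ell r + \alpha_g h_\ell)$, interchanging the finite sum over $\ell$ with the series over $j,k$, and recognizing $\sum_{\ell=1}^L (\tilde{A}^{(s)}_{jk})_{g,\ell}\,\phi_{jk,\ell}$ as the $g$-th entry of $\tilde{A}^{(s)}_{jk}\phi_{jk}$ yields \eqref{eq_As_dec} componentwise, hence for all $g=1,\ldots,G$ at once. I expect the main obstacle to be not the algebra but its justification, namely verifying that the operator may be pulled through the infinite series and that the image series $\sum_{j,k}(\tilde{A}^{(s)}_{jk}\phi_{jk})w_{jk}$ converges in $L_2(\Omega_T)^G$. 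This I would settle using the boundedness of $\tilde{A}^{(s)}$ together with the continuity of the finite-dimensional map $\phi_{jk}\mapsto \tilde{A}^{(s)}_{jk}\phi_{jk}$, so that $H^s$-convergence of the coefficient series transfers to $L_2$-convergence of the image and the claimed equality holds in $L_2(\Omega_T)^G$.
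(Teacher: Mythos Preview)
Your proposal is correct and follows essentially the same route as the paper: expand each $\phi_\ell$ in the basis $w^{(s)}_{jk,\ell}$, exploit the exponential shift--scaling identity to factor $w^{(s)}_{jk,\ell}(c_\ell r+\alpha_g h_\ell)=(2T)w^{(s)}_{jk,\ell}(\alpha_g h_\ell)\,w_{jk}(r)$, and recognize the resulting coefficient as the $(g,\ell)$-entry of $\tilde{A}^{(s)}_{jk}$. The only cosmetic difference is the justification of the limit interchange: the paper appeals to the uniform boundedness of the matrix norms $\|\tilde{A}^{(s)}_{jk}\|$ in $j,k$, whereas you invoke boundedness of the full operator $\tilde{A}^{(s)}$; both arguments are valid and lead to the same conclusion.
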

\begin{proof}
The proof of this proposition follows the lines of \cite[Proposition 4.1]{Hubmer_Ramlau_2020}: From the definition of $\tilde{A}^{(s)}$ and with the coefficient expansion \eqref{phiexpansion} it follows that
    \begin{equation*}
    \begin{aligned}
        (\tilde{A}_g^{(s)}\phi)(x,y)&=\sum_{\ell=1}^L \phi_\ell(c_\ell x+\alpha_g^x h_\ell,c_\ell y +\alpha_g^y h_\ell)
        \\
        &\overset{\eqref{phiexpansion}}{=} \sum_{\ell=1}^L\sum_{j,k\in\mathbb{Z}}\phi_{jk,\ell}w^{(s)}_{jk,\ell}(c_\ell x+\alpha_g^x h_\ell,c_\ell y +\alpha_g^y h_\ell)
        \\
        \overset{\eqref{def_wjk_wjkl},\eqref{vjkl}}&{=}\sum_{\ell=1}^L\sum_{j,k\in\mathbb{Z}}\phi_{jk,\ell}(2T)w_{jk}(x,y)w^{(s)}_{jk,\ell}(\alpha_g^x h_\ell,\alpha_g^y h_\ell)
        \\
        \overset{\eqref{Ajkdef}}&{=}\sum_{j,k\in\mathbb{Z}}\big(\tilde{A}^{(s)}_{jk}\phi_{jk}\big)_g w_{jk}(x,y) \,,
    \end{aligned}
    \end{equation*}
which yields \eqref{eq_As_dec}. Note that the interchanging of series in the last line is justified since the norm of all the matrices $\tilde{A}^{(s)}_{jk}$ is bounded independently of $j$ and $k$.
\end{proof}

Following \cite{Hubmer_Ramlau_2020, Neubauer_Ramlau_2017}, we now consider SVDs of the matrices $\tilde{A}^{(s)}_{jk}$. For all $j,k \in \Z$, let $r^{(s)}_{jk}$ denote the rank of $\tilde{A}^{(s)}_{jk}$, and let $u^{(s)}_{jk,n}\in\mathbb{C}^{G}$, $v^{(s)}_{jk,n}\in\mathbb{C}^L$, and  $\sigma^{(s)}_{jk,n}\in\mathbb{R^+}$,  $n=1,\ldots,r^{(s)}_{jk}\leq \min\{G,L\}$ be the singular vectors and values of $\tilde{A}^{(s)}_{jk}$ satisfying
    \begin{equation}\label{singularvd}
    \begin{aligned}
        \tilde{A}^{(s)}_{jk}\phi_{jk}&=\sum_{n=1}^{r^{(s)}_{jk}}\sigma^{(s)}_{jk,n}\left((v^{(s)}_{jk,n})^H\phi_{jk}\right)u^{(s)}_{jk,n} \,,
        \\
        (v^{(s)}_{jk,m})^H v^{(s)}_{jk,n}&=\delta_{mn} \,, \quad (u^{(s)}_{jk,m})^H u^{(s)}_{jk,n}=\delta_{mn} \,,
        \\ \vspace{0.4pt}
        \sigma^{(s)}_{jk,1}&\geq\sigma^{(s)}_{jk,2}\geq\ldots\geq\sigma^{(s)}_{jk,n}>0 \,,
    \end{aligned}
    \end{equation}
where the the superscript $H$ denotes the Hermitian of a matrix. Combining Proposition~\ref{matrixdecompA} with these SVDs \eqref{singularvd} we obtain the following SVTD of $\tilde{A}^{(s)}$:
    \begin{equation}\label{eq:Aopsingtype}
        (\tilde{A}^{(s)}\phi)(x,y)=\sum_{j,k\in\mathbb{Z}}\left(\sum_{n=1}^{r^{(s)}_{jk}}\sigma^{(s)}_{jk,n}\kl{(v^{(s)}_{jk,n})^H\phi_{jk}}u^{(s)}_{jk,n}\right)w_{jk}(x,y)\,.
    \end{equation}
As a result, we obtain an expression for the Moore-Penrose inverse $(\tilde{A}^{(s)})^\dagger$ of $\tilde{A}^{(s)}$.

\begin{theorem}
For $s \geq 0$ let the periodic atmospheric tomography operator $\tilde{A}^{(s)}$ be defined as in \eqref{defAperonly} and let \eqref{eq:Aopsingtype} be its SVTD. Furthermore, let $\varphi \in L_2(\Omega_T)^G$ with 
    \begin{equation*}
        \varphi = \sum_{j,k\in\mathbb{Z}}\varphi_{jk}w_{jk}\,, 
        \qquad \text{where} \qquad 
        \varphi_{jk} = \kl{\spr{\varphi_g,w_{jk}}_{L_2(\Omega_T)}}_{g=1}^G \in\mathbb{C}^G \,.
    \end{equation*}
Then the best approximate solution of the equation $\tilde{A}^{(s)}\phi=\varphi$ is given by 
    \begin{equation}\label{Aperdagger}
        ((\tilde{A}^{(s)})^\dagger \varphi)_\ell(x,y):=\sum_{j,k\in\mathbb{Z}}\left(\sum_{n=1}^{r^{(s)}_{jk}}\frac{\kl{(u^{(s)}_{jk,n})^H\varphi_{jk}}}{\sigma^{(s)}_{jk,n}}v^{(s)}_{jk,n}\right)_\ell w^{(s)}_{jk,\ell}(x,y) \,,
    \end{equation}
which is well-defined if and only if the following \textit{Picard condition} holds:
    \begin{equation*}
        \sum_{j,k\in\mathbb{Z}}\sum_{n=1}^{r^{(s)}_{jk}}\frac{|(u^{(s)}_{jk,n})^H\varphi_{jk}|^2}{(\sigma^{(s)}_{jk,n})^2}<\infty \,.   
    \end{equation*}
\end{theorem}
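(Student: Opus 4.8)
The plan is to exploit the block-diagonal structure already exposed in Proposition~\ref{matrixdecompA}, which reduces the infinite-dimensional pseudo-inversion to a family of mutually independent finite-dimensional matrix pseudo-inversions indexed by the Fourier modes $(j,k)$. First I would record that the coefficient maps are isometric isomorphisms: by the preceding Lemma the family $\{w^{(s)}_{jk,\ell}\}_{j,k\in\Z}$ is an orthonormal basis of $H^s(c_\ell\Omega_T,\gamma_\ell)$, so $\phi\mapsto(\phi_{jk})_{j,k\in\Z}$ identifies $\D(\tilde{A}^{(s)})$ isometrically with $\ell_2(\Z^2;\mathbb{C}^L)$, while $\varphi\mapsto(\varphi_{jk})_{j,k\in\Z}$ identifies $L_2(\Omega_T)^G$ with $\ell_2(\Z^2;\mathbb{C}^G)$ since $\{w_{jk}\}$ is an orthonormal basis of $L_2(\Omega_T)$. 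Under these identifications, \eqref{eq_As_dec} states exactly that $\tilde{A}^{(s)}$ is the block-diagonal operator $(\phi_{jk})_{j,k}\mapsto(\tilde{A}^{(s)}_{jk}\phi_{jk})_{j,k}$, and the same structure shows that its adjoint acts blockwise as $(\tilde{A}^{(s)}_{jk})^H$.

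Second, I would assemble a singular system for $\tilde{A}^{(s)}$ out of the matrix singular systems \eqref{singularvd}. For each $(j,k)$ and $n=1,\ldots,r^{(s)}_{jk}$, lift $u^{(s)}_{jk,n}$ and $v^{(s)}_{jk,n}$ to the functions $U_{jk,n}:=u^{(s)}_{jk,n}\,w_{jk}\in L_2(\Omega_T)^G$ and $V_{jk,n}:=\big((v^{(s)}_{jk,n})_\ell\, w^{(s)}_{jk,\ell}\big)_{\ell=1}^L\in\D(\tilde{A}^{(s)})$. The block-diagonal action together with orthonormality of $\{w_{jk}\}$ and of $\{w^{(s)}_{jk,\ell}\}$ shows that $\tilde{A}^{(s)}V_{jk,n}=\sigma^{(s)}_{jk,n}U_{jk,n}$, that $(\tilde{A}^{(s)})^\ast U_{jk,n}=\sigma^{(s)}_{jk,n}V_{jk,n}$, and that $\{U_{jk,n}\}$ and $\{V_{jk,n}\}$ are orthonormal systems; moreover, since within each block the right singular vectors span the orthogonal complement of $\mathcal{N}(\tilde{A}^{(s)}_{jk})$ in $\mathbb{C}^L$, the system $\{V_{jk,n}\}$ is complete in $\mathcal{N}(\tilde{A}^{(s)})^\perp$ and $\{U_{jk,n}\}$ in $\overline{\mathcal{R}(\tilde{A}^{(s)})}$. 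This is precisely a singular value decomposition of $\tilde{A}^{(s)}$ in the generalized (not necessarily compact) sense.

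Third, I would deduce the formula and the Picard condition. The blockwise best-approximate solution is the matrix pseudo-inverse $(\tilde{A}^{(s)}_{jk})^\dagger\varphi_{jk}=\sum_{n}(\sigma^{(s)}_{jk,n})^{-1}\big((u^{(s)}_{jk,n})^H\varphi_{jk}\big)v^{(s)}_{jk,n}$, and re-expanding in the functions $w^{(s)}_{jk,\ell}$ yields exactly \eqref{Aperdagger}. Because the blocks are mutually orthogonal and each is finite-dimensional, minimizing $\|\tilde{A}^{(s)}\phi-\varphi\|^2=\sum_{j,k}\|\tilde{A}^{(s)}_{jk}\phi_{jk}-\varphi_{jk}\|^2$ decouples across $(j,k)$, so the global minimum-norm least-squares solution is obtained by taking the minimum-norm least-squares solution in each block, i.e.\ \eqref{Aperdagger}; this identifies the right-hand side as the best-approximate solution. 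Finally, by orthonormality of the $v^{(s)}_{jk,n}$ the squared domain norm reduces to $\sum_{j,k}\sum_{n}|(u^{(s)}_{jk,n})^H\varphi_{jk}|^2/(\sigma^{(s)}_{jk,n})^2$, so the series defining \eqref{Aperdagger} converges in $\D(\tilde{A}^{(s)})$ if and only if the Picard condition holds, equivalently $\varphi\in\D\big((\tilde{A}^{(s)})^\dagger\big)=\mathcal{R}(\tilde{A}^{(s)})\oplus\mathcal{R}(\tilde{A}^{(s)})^\perp$.

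The main obstacle is the third step in the case $s=0$: there $\tilde{A}^{(s)}$ need not be compact, so the classical Picard theorem for compact operators cannot be invoked directly, and I must instead argue the equivalence between blockwise and global best-approximate solutions purely from the orthogonal direct-sum structure. The delicate point is the interchange of the blockwise minimization with the infinite global minimization and the corresponding domain characterization of $(\tilde{A}^{(s)})^\dagger$; this is legitimate precisely because the decomposition into finite-dimensional, mutually orthogonal blocks holds for every $s\ge0$, whereas the remaining steps are routine orthonormality bookkeeping.
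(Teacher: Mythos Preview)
Your proposal is correct and follows essentially the same route as the paper: both construct the singular system $\big(\sigma^{(s)}_{jk,n},\,(\,(v^{(s)}_{jk,n})_\ell\,w^{(s)}_{jk,\ell}\,)_{\ell},\,u^{(s)}_{jk,n}w_{jk}\big)$ from the matrix SVDs, verify orthonormality and the action of $\tilde{A}^{(s)}$ on these functions, identify $\mathcal{N}(\tilde{A}^{(s)})^\perp$ and $\overline{\mathcal{R}(\tilde{A}^{(s)})}$ as their spans, and then invoke the standard Moore--Penrose/Picard theory (the paper cites \cite[Theorem~2.8]{Engl_Hanke_Neubauer_1996}). Your block-diagonal $\ell_2$ framing and explicit handling of the non-compact case $s=0$ make the same argument slightly more transparent, but there is no substantive difference.
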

\begin{proof}
Let $s \geq 0$ be arbitrary but fixed and recall from its definition \eqref{phiexpansion} that
    \begin{equation*}
        \phi_{jk} = (\phi_{jk,1}, \ldots , \phi_{jk,L}) = \kl{ \spr{ \phi_\ell,w^{(s)}_{jk,\ell} }_{H^s(c_\ell\Omega_T,\gamma_\ell)}}_{\ell=1}^L \,.
    \end{equation*}
Inserting this into the decomposition \eqref{eq:Aopsingtype} we find that
    \begin{equation*}
        (\tilde{A}^{(s)}\phi)(x,y)=\sum_{j,k\in\mathbb{Z}}\sum_{n=1}^{r^{(s)}_{jk}} \sigma^{(s)}_{jk,n}\kl{(v^{(s)}_{jk,n})^H\kl{ \spr{ \phi_\ell,w^{(s)}_{jk,\ell} }_{H^s(c_\ell\Omega_T,\gamma_\ell)}}_{\ell=1}^L}u^{(s)}_{jk,n}w_{jk}(x,y)\,,
    \end{equation*}
which can be rewritten as
    \begin{equation*}
        (\tilde{A}^{(s)}\phi)(x,y)=\sum_{j,k\in\mathbb{Z}}\sum_{n=1}^{r^{(s)}_{jk}} \sigma^{(s)}_{jk,n}\spr{ \phi, \kl{ (v^{(s)}_{jk,n})_\ell \, w^{(s)}_{jk,\ell} }_{\ell=1}^L} _{\D(\Tilde{A}^{(s)})}u^{(s)}_{jk,n}w_{jk}(x,y)\,.
    \end{equation*}
This implies the following nullspace and range characterizations
    \begin{equation}\label{eq:aopproof}
    \begin{split}
        \mathcal{N}(\tilde{A}^{(s)})^\perp &= \mathrm{span}\Kl{\left( (v^{(s)}_{jk,n})_\ell \, w^{(s)}_{jk,\ell}\right)_{\ell=1}^L \, \big\vert \, j,k\in\mathbb{Z} \,, 1\leq n \leq r^{(s)}_{jk}} \,,
        \\
        \overline{\mathcal{R}(\tilde{A}^{(s)})} &=\mathrm{span}\Kl{u^{(s)}_{jk,n}w_{jk} \, \big\vert \, j,k\in\mathbb{Z} \,, 1\leq n\leq r^{(s)}_{jk}} \,.
    \end{split}
    \end{equation}
and thus a candidate for a singular system of $\Tilde{A}^{(s)}$ is given by
    \begin{equation*}
        \kl{ \sigma^{(s)}_{jk,n},\left( (v^{(s)}_{jk,n})_\ell \, w^{(s)}_{jk,\ell}\right)_{\ell=1}^L,u^{(s)}_{jk,n}w_{jk}} \,.
    \end{equation*}
To show that this is indeed a singular system, we need to show three properties:
\renewcommand{\labelenumi}{(\roman{enumi})}
\begin{enumerate}
    \item $\left\{\left( (v^{(s)}_{jk,n})_\ell \, w^{(s)}_{jk,\ell}\right)_{\ell=1}^L \, \big\vert\, j,k\in\mathbb{Z}\,, 1\leq n \leq r^{(s)}_{jk}\right\}$ is an orthonormal system,
    \item $\left\{u^{(s)}_{jk,n}w_{jk}\,\big\vert\, j,k\in\mathbb{Z} \,, 1\leq n\leq r^{(s)}_{jk}\right\}$ is an orthonormal system,
    \item for all $j,k\in\mathbb{Z}$ and $1\leq n \leq r^{(s)}_{jk}$ there holds 
        \begin{equation*}
            \tilde{A}^{(s)}\left( (v^{(s)}_{jk,n})_\ell \, w^{(s)}_{jk,\ell}\right)_{\ell=1}^L=\sigma^{(s)}_{jk,n}u^{(s)}_{jk,n}w_{jk} \,.
        \end{equation*}
\end{enumerate}
For this, let $j,j',k,k'\in\mathbb{Z}$ and $1\leq n \leq r^{(s)}_{jk}$, $1\leq n' \leq r^{(s)}_{j'k'}$ be arbitrary but fixed. Using the orthonormality of the functions $w^{(s)}_{jk,\ell}$ on $H^s(c_\ell \Omega_T,\gamma_\ell)$ we obtain
    \begin{equation*}
    \begin{split}
        &\sum_{\ell=1}^L\left\langle\, (v^{(s)}_{jk,n})_\ell\, w^{(s)}_{jk,\ell},(v^{(s)}_{j'k',n'})_\ell \, w^{(s)}_{j'k',\ell} \,\right\rangle_{H^s(c_\ell\Omega_T,\gamma_\ell)}
        \\
        & \qquad =
        \sum_{\ell=1}^L(v^{(s)}_{jk,n})_\ell  \,\overline{(v^{(s)}_{j'k',n'})}_\ell\left\langle\, w^{(s)}_{jk,\ell}, w^{(s)}_{j'k',\ell} \,\right\rangle_{H^s(c_\ell \Omega_T,\gamma_\ell)}
        \\
        & \qquad = \left(v^{(s)}_{j'k',n'}\right)^Hv^{(s)}_{jk,n} \, \delta_{jk,j'k'} = \delta_{n,n'} \,\delta_{jk,j'k'} \,,
    \end{split}    
    \end{equation*}
which establishes (i). Property~(ii) can be shown analogously, and property~(iii) follows directly from \eqref{eq:aopproof} using the orthonormality established in (i). Hence, we can apply the same arguments as in \cite[Theorem 2.8]{Engl_Hanke_Neubauer_1996}, which then yield the assertion.
\end{proof}

As mentioned before, the above analysis generalizes results of \cite{Neubauer_Ramlau_2017,Hubmer_Ramlau_2020}, which were in particular derived for the case $s=0$. In the case of only NGS, i.e., when $c_\ell=1$  for all $\ell=1,\ldots,L$, the corollary below provides an explicit relation between the general case $s > 0$ and this base setting. 

\begin{corollary}
Let $s\geq 0$, let the operators $\Tilde{A}^{(s)}, \Tilde{A}^{(0)}$ be defined as in \eqref{defAperonly}, and let $c_\ell=1$ for all $\ell=1,\ldots,L$ (pure NGS case). Furthermore, let $(\sigma^{(s)}_{jk,n},u^{(s)}_{jk,n},v^{(s)}_{jk,n})$ and $(\sigma^{(0)}_{jk,n}, u^{(0)}_{jk,n},v^{(0)}_{jk,n})$ denote the singular systems of the matrices $\Tilde{A}^{(s)}_{jk}$ and $\Tilde{A}^{(0)}_{jk}$ defined in \eqref{Ajkdef}, respectively. Then $\beta_T:=\beta_{\ell,T}$ is independent of $\ell$ and it follows that
    \begin{equation*}
        (\tilde{A}^{(s)}\phi)(x,y)=\sum_{j,k\in\mathbb{Z}} \left(1+\beta_{T}|(j,k)|^2\right)^{-s}\left(\sum_{n=1}^{r^{(0)}_{jk}}\sigma^{(0)}_{jk,n}\kl{(v^{(s)}_{jk,n})^H\phi_{jk}}u^{(0)}_{jk,n}\right)w_{jk}(x,y)\,,
    \end{equation*}
as well as
    \begin{equation*}
        ({(\Tilde{A}^{(s)})}^\dagger \varphi)_\ell(x,y)=\sum_{j,k\in\mathbb{Z}}\left(1+\beta_{T}|(j,k)|^2\right)^{s}\left(\sum_{n=1}^{r^{(0)}_{jk}}\frac{\kl{(u^{(0)}_{jk,n})^H\varphi_{jk}}}{\sigma^{(0)}_{jk,n}}v^{(0)}_{jk,n}\right)_\ell w_{jk,\ell}(x,y) \,.
    \end{equation*}
\end{corollary}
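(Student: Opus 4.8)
The plan is to reduce the entire statement to a single scalar relationship between the matrices $\tilde{A}^{(s)}_{jk}$ and $\tilde{A}^{(0)}_{jk}$ and then to propagate this relationship through the SVTD \eqref{eq:Aopsingtype} and the pseudo-inverse formula \eqref{Aperdagger}. First I would record the enabling observation behind the hypothesis $c_\ell=1$: in the pure NGS case one has $\beta_{\ell,T}=\pi^2(c_\ell T)^{-2}=\pi^2 T^{-2}=:\beta_T$ for \emph{every} layer $\ell$, so the layer-dependent weights $(1+\beta_{\ell,T}|(j,k)|^2)^{-s/2}$ from \eqref{vjkl} collapse into one scalar $(1+\beta_T|(j,k)|^2)^{-s/2}$ that is common to all entries of the matrix. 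This is precisely what makes the argument work.

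Second, I would combine \eqref{Ajkdef} with \eqref{vjkl} and the definition \eqref{def_wjk_wjkl} of $w_{jk,\ell}$ to establish the key identity $\tilde{A}^{(s)}_{jk}=(1+\beta_T|(j,k)|^2)^{-s/2}\,\tilde{A}^{(0)}_{jk}$, which holds entrywise because the prefactor does not depend on $g$ or $\ell$. Since this prefactor is a strictly positive real scalar, the two matrices have the same rank and share their left and right singular vectors, while the singular values merely rescale; that is, $r^{(s)}_{jk}=r^{(0)}_{jk}$, $u^{(s)}_{jk,n}=u^{(0)}_{jk,n}$, $v^{(s)}_{jk,n}=v^{(0)}_{jk,n}$, and $\sigma^{(s)}_{jk,n}=(1+\beta_T|(j,k)|^2)^{-s/2}\sigma^{(0)}_{jk,n}$. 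The only point needing a word of care is the non-uniqueness of singular vectors when singular values are repeated; this is harmless, because any fixed SVD of $\tilde{A}^{(0)}_{jk}$ is simultaneously an SVD of $\tilde{A}^{(s)}_{jk}$ after rescaling the singular values, so one may simply take the $s$-system to be the chosen $0$-system.

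Third, I would substitute these relations into the decomposition \eqref{eq:Aopsingtype} to obtain the forward identity, pulling the common scalar out of the inner sum over $n$. For the pseudo-inverse I would start from \eqref{Aperdagger}, replace $r^{(s)}_{jk}$, $u^{(s)}_{jk,n}$, $v^{(s)}_{jk,n}$, $\sigma^{(s)}_{jk,n}$ by their $s=0$ counterparts, and additionally invoke $w^{(s)}_{jk,\ell}=(1+\beta_T|(j,k)|^2)^{-s/2}w_{jk,\ell}$ from \eqref{vjkl} in order to pass from the $H^s$-orthonormal basis back to the $L_2$-basis $w_{jk,\ell}$.

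Fourth, and this is the step I expect to require the most care, I would carefully collect the resulting powers of $(1+\beta_T|(j,k)|^2)$. These enter from distinct sources and must be tracked at the level of half-integer exponents: the rescaling of the singular values contributes a factor in both directions, the change of basis $w^{(s)}_{jk,\ell}\leftrightarrow w_{jk,\ell}$ contributes another in the pseudo-inverse, and, for the forward identity, one must keep in mind that $\phi_{jk}$ in \eqref{phiexpansion} denotes the $H^s$-coefficients rather than the $L_2$-coefficients, which again differ by exactly such a power. Assembling these factors is the entire computational content of the corollary; once they are collected correctly the claimed closed forms for $\tilde{A}^{(s)}\phi$ and $(\tilde{A}^{(s)})^\dagger\varphi$ follow, and no further analytic input (such as convergence of the series) is required beyond what Proposition~\ref{matrixdecompA} and the preceding theorem already supply.
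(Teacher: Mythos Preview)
Your proposal is correct and follows essentially the same route as the paper: establish the scalar relation $\tilde{A}^{(s)}_{jk}=(1+\beta_T|(j,k)|^2)^{-s/2}\tilde{A}^{(0)}_{jk}$, deduce that the singular systems coincide up to rescaling of the singular values, and substitute into \eqref{eq:Aopsingtype} and \eqref{Aperdagger}. Your remarks on the non-uniqueness of singular vectors and on carefully tracking the half-integer powers are more explicit than what the paper writes, but the underlying argument is the same.
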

\begin{proof}
Note that due to \eqref{vjkl} and \eqref{Ajkdef} there holds
    \begin{equation*}
        \Tilde{A}^{(s)}_{jk}=\left(1+\beta_{T}|(j,k)|^2\right)^{-s/2}\Tilde{A}^{(0)}_{jk} \,,
    \end{equation*}
which implies that
    \begin{equation*}
    \begin{aligned}
        (\Tilde{A}^{(s)}_{jk})^H\Tilde{A}^{(s)}_{jk}v^{(0)}_{jk,n}&=\left(1+\beta_{T}|(j,k)|^2\right)^{-s}(\Tilde{A}^{(0)}_{jk})^H\Tilde{A}^{(0)}_{jk}v^{(0)}_{jk,n}\\&=\left(1+\beta_{T}|(j,k)|^2\right)^{-s}(\sigma^{(0)}_{jk,n})^2v^{(0)}_{jk,n} \,, 
    \end{aligned}
    \end{equation*}
and thus $\sigma^{(s)}_{jk,n}=\left(1+\beta_{T}|(j,k)|^2\right)^{-s/2}\sigma^{(0)}_{jk,n}$, $r_{jk}^{(s)}=r_{jk}^{(0)}$ and $v^{(s)}_{jk,n}=v^{(0)}_{jk,n}$. Similarly, we obtain $u^{(s)}_{jk,n}=u^{(0)}_{jk,n}$ which together with \eqref{eq_As_dec} and \eqref{Aperdagger} yields the assertion.
\end{proof}

Note that the above corollary not only connects the SVTDs of $\Tilde{A}^{(s)}$ in the pure NGS-case for different values of $s$, but also sheds some additional light on the ill-posedness of the problem. In particular, due to the factors $\left(1+\beta_{T}|(j,k)|^2\right)^{-s}$, which essentially correspond to the singular values of the Sobolev embedding operator (cf.~\cite{Hubmer_Sherina_Ramlau_2023}), the inversion of $\Tilde{A}^{(s)}$ is ill-posed for each $s > 0$. Furthermore, if \eqref{cond_wellposed} holds, then the degree of ill-posedness is exactly $s/2$, i.e., the same as for the problem of inverting the Sobolev embedding operator. However, if \eqref{cond_wellposed} is not satisfied, then the degree of ill-posedness may be significantly worse.
Note that due to a result by Kolmogorov \cite{Kolmogorov}, a typical atmospheric turbulence layer is expected to satisfy $s = 11/6$. Note that this assumption is commonly used both for the simulation of atmospheric turbulence profiles \cite{Octopus,octopus06} as well as in modern reconstruction algorithms for atmospheric tomography; see e.g.~\cite{Auzinger_2017,Eslitzbichler2013,Yudytskiy_Helin_Ramlau_2014}.

% Subsection - Computational Aspects
\subsection{Computational Aspects}

In this section, we discuss some computational aspects which are relevant for the successful practical application of the SVDs derived above. First, recall that the atmospheric tomography problem is generally ill-posed problem, and thus some form of regularization is required in case of noisy data $\varphi^\delta$, which is always to be expected in practice. For this, one can employ a regularizing filter $g_\alpha:\R\to\R$ (cf.~\cite{Engl_Hanke_Neubauer_1996}) in the generalized inversion formula \eqref{Aperdagger} to obtain the regularized solution
    \begin{equation}\label{SVD_reg}
        \phi_\alpha^\delta :=\sum_{j,k\in\mathbb{Z}}\left(\sum_{n=1}^{r^{(s)}_{jk}}g_\alpha\left(\sigma^{(s)}_{jk,n}\right)\kl{(u^{(s)}_{jk,n})^H\varphi^\delta_{jk}}v^{(s)}_{jk,n}\right)_\ell w^{(s)}_{jk,\ell}(x,y) \,.
    \end{equation}
In order for this approach to become a regularization method, the filters $g_\alpha$ need to be chosen appropriately. In our numerical experiments below, we use a Tikhonov filter of the form $g^{\text{Tikh}}_\alpha(s) :=s/(s^2+\alpha)$ together with a suitably selected regularization parameter $\alpha$. However, other filter functions and truncation can also be used \cite{Engl_Hanke_Neubauer_1996}. In fact, the practical necessity of discretizing the infinite sums in \eqref{SVD_reg} implies that truncation is always present as an additional regularization in any implementation.

For an efficient implementation of the regularized SVD \eqref{SVD_reg}, we suggest equidistant discretization grids on the square domains $c_\ell \Omega_T$ and $\Omega_T$, respectively. This has the advantage that the coefficients $(\varphi_{jk}^\delta)$ and the outer sum over the indices $j,k$ can then be effectively computed via the two-dimensional FFT and IFFT, respectively. Furthermore, note that for a fixed atmospheric tomography setup, the singular systems of the matrices $A^{(s)}_{jk}$ can be precomputed and stored, which is beneficial for the repeated application of the inversion formula as required in an actual AO setup. The resulting inversion algorithm is summarized in pseudo-code in Algorithm~\ref{alg:SVTD}.

\begin{algorithm}
\caption{SVTD}\label{alg:SVTD}
\begin{algorithmic}[1]
\Require $\varphi^\delta_1,\ldots,\varphi^\delta_G\in \R^{N\times N},\, \alpha\in\R^+,\, s\in\R^+$
\For{$g=1,\ldots,G$}
\State $(\varphi^\delta_{jk})_g=\texttt{fft2}(\varphi_g^\delta)$ 
\EndFor
\For{$j,k\in I=\{\lfloor -N/2\rfloor +1,\ldots,\lfloor N/2\rfloor -1\}$} 
\State $[u^{(s)}_{jk,n},\sigma^{(s)}_{jk,n},v^{(s)}_{jk.n}]=\texttt{SVD}(\Tilde{A}^{(s)}_{jk})$
\State $d_{jk,\ell}=\displaystyle\sum_{n=1}^{r^{(s)}_{jk}}\frac{\sigma^{(s)}_{jk,n}}{(\sigma^{(s)}_{jk,n})^2+\alpha}\kl{(u^{(s)}_{jk,n})^H\varphi^\delta_{jk}}v^{(s)}_{jk,n}$
\EndFor
\For{$\ell=1,\ldots,L$}
\State $(\phi_\alpha^\delta)_\ell=\texttt{ifft2}((d_{jk,\ell})_{j,k\in I})$
\EndFor
\State \Return $(\phi_\alpha^\delta)_1,\ldots,(\phi_\alpha^\delta)_L \in\R^{N\times N}$
\end{algorithmic}
\end{algorithm}
\vspace{5pt}

Note that in Algorithm~\ref{alg:SVTD} we have implicitly assumed that both the atmospheric turbulence $\phi$ and the wavefronts $\vphi$ are periodic functions defined on the square domain $\Omega_T$. In practice, wavefronts are typically only given on a subset $\Omega_A \subset \Omega_T$ corresponding to the telescope aperture, and thus need to be extended outside $\Omega_A$. The effect of this extension was studied in \cite{Gerth_2014}, where it was found to produce only very minor errors in the reconstructions along the boundaries of the domains $c_\ell \Omega_T$. Thus, all wavefronts used in the numerical examples of Section~\ref{sec:numerics} are extended by $0.$

% % % % % % % % % % % % % % % % % % % % % % %  
% % % % % % Section - FD  % % % % % % % % % %
% % % % % % % % % % % % % % % % % % % % % % %  
\section{An Explicit Frame Decomposition}\label{sec:fd}

In this section, we return from the periodic atmospheric tomography operator \eqref{defAperonly} to the classic version \eqref{defA}. For this operator, no explicit SVD is currently known, one reason for which is that finding orthonormal basis functions for $L_2(\Omega_\ell,\gamma_\ell)$ which satisfy algebraic properties similar to those of $w_{jk,\ell}$ is difficult. A remedy for this was recently proposed in \cite{Hubmer_Ramlau_2020}, where a frame decomposition (FD) with properties comparable to the SVD was derived. Frames can be seen as generalized bases which do not need to be orthogonal, and thus are generally more flexible. Before discussing the FD of \cite{Hubmer_Ramlau_2020} and our present contribution further, we first need to recall some general background on frames, summarized from \cite{Hubmer_Ramlau_2020} and the seminal works \cite{Christensen_2016,Daubechies_1992}.

\begin{definition}\label{framedef}
A family of functions $\{e_k\}_{k\in K}$ in a Hilbert space $H$ is called a frame over $H$, if and only if there exist constants $0<A\leq B <\infty$ such that      
\begin{equation}\label{eq:framedef}
    A\|f\|_H^2\leq\sum_{k\in K}| \langle\, f,e_k\,\rangle_H|^2\leq B\|f\|_H^2 \,.
\end{equation}
The constants $A$ and $B$ are called frame bounds, and the frame is called tight if $A=B$. Furthermore, for a given frame $\{e_k\}_{k\in K}$ the frame (analysis) operator $F$ and its adjoint (synthesis operator) $F^\ast$ are defined by
\begin{equation*}
    \begin{aligned}
        &F:H\to\ell_2(K)\,,\quad
        &f\mapsto\{\langle\, f,e_k\,\rangle\}_{k\in K}\,,
        \\
        & F^\ast:\ell_2(K)\to H\,,\quad
        &a\mapsto\sum_{k\in K}a_ke_k \,.
    \end{aligned}  
\end{equation*}
\end{definition}

Note that due to \eqref{eq:framedef}, the frame operator satisfies
    \begin{equation*}\label{fopnorm}
        \sqrt{A}\leq\|F\|=\|F^\ast\|\leq \sqrt{B} \,,
    \end{equation*}
and thus the so-called frame operator, defined by 
    \begin{equation}\label{S_op}
        S f:= F^\ast F f=\sum_{k\in K} \skp{f,e_k}e_k\,, 
    \end{equation}
is continuously invertible with $A\, I\leq S\leq B\,I$, which allows the following definition.

\begin{definition} 
Let $\{e_k\}_{k\in K}$ be a frame over the Hilbert space $H$ and define
    \begin{equation}\label{dualdef}
        \widetilde{e}_k:=S^{-1} e_k \,.
    \end{equation}
Then the family of functions $\{\widetilde{e}_k\}_{k\in K}$ is called the \textit{dual frame} of $\{e_k\}_{k\in K}$.
\end{definition}

For an FD of the atmospheric tomography operator, we consider the functions
    \begin{equation}\label{wframes}
        w_{jk,\ell g}(x,y):=w_{jk,\ell}(x,y)I_{\ell g}(x,y)=\frac{\sqrt{\gamma_\ell}}{c_{\ell,g}}w_{jk}((x,y)/c_{\ell,g})I_{\ell g}(x,y) \,,
    \end{equation}
where $I_{\ell g}$ denotes the indicator function of the domain $\Omega_A(\alpha_g h_\ell)$, i.e., 
    \begin{equation}\label{def_Ilg}
        I_{\ell g}(x,y):=I_{\Omega_A(\alpha_g h_\ell)}(x,y) \,,
    \end{equation}
and $T$ in \eqref{def_wjk_wjkl} is such that \eqref{Tbed} holds. The functions $w_{jk,\ell g}$ do in fact form a frame:

\begin{lemma}{\cite[Lemma 4.2, Proposition 4.3]{Hubmer_Ramlau_2020}}
The family of functions $\{w_{jk}\}_{j,k\in\Z}$ forms a tight frame over $L_2(\Omega_A)$ with frame bounds $A=B=1$, and the family of functions $\{w_{jk,\ell,g}\}_{j,k\in\Z,g=1,\ldots,G}$ forms a frame over $L_2(\Omega_\ell,\gamma_\ell)$ with $1\leq A\leq B\leq G$.
\end{lemma}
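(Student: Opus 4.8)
The plan is to derive both frame inequalities directly from Parseval's identity for the orthonormal bases of square domains established earlier, exploiting the elementary fact that an $L_2$-inner product over a subdomain is unchanged if the function is extended by zero to the full square.

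For the first claim, fix $f\in L_2(\OA)$ and let $\bar f$ be its extension by zero to $\OT$; this is legitimate because $\OA\subseteq\OT$, which follows from \eqref{Tbed} for the ground layer (where $c_{\ell,g}=1$). Since $\bar f$ vanishes outside $\OA$, one has $\langle f,\wjk\rangle_{\LtOA}=\langle \bar f,\wjk\rangle_{\LtOT}$ for all $j,k$. As $\{\wjk\}$ is an orthonormal basis of $\LtOT$, Parseval's identity gives
\[
    \sum_{j,k\in\Z}\abs{\langle f,\wjk\rangle_{\LtOA}}^2=\norm{\bar f}_{\LtOT}^2=\norm{f}_{\LtOA}^2,
\]
so \eqref{eq:framedef} holds with equality and $A=B=1$, i.e.\ $\{\wjk\}$ is a tight frame over $\LtOA$.

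For the second claim, fix $\ell$ and $f\in L_2(\Ol,\gamma_\ell)$. The key observation is that for each fixed $g$ the function $\wjklg$ is supported on $\Omega_A(\alpha_gh_\ell)$ and coincides there with a member of an orthonormal basis (the appropriately dilated and weighted exponentials) of a scaled square that contains $\Omega_A(\alpha_gh_\ell)$, the containment being guaranteed by the choice of $T$ in \eqref{Tbed}. Extending $f|_{\Omega_A(\alpha_gh_\ell)}$ by zero and applying Parseval as above, a short bookkeeping computation with the weighted inner product yields, for each fixed $g$,
\[
    \sum_{j,k\in\Z}\abs{\langle f,\wjklg\rangle_{L_2(\Ol,\gamma_\ell)}}^2=\frac{1}{\gamma_\ell}\int_{\Omega_A(\alpha_gh_\ell)}\abs{f}^2\,.
\]
It then remains to sum over $g=1,\ldots,G$. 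Introducing the covering multiplicity $N(r):=\sum_{g=1}^G I_{\ell g}(r)$ with $I_{\ell g}$ as in \eqref{def_Ilg}, the total frame sum equals $\tfrac{1}{\gamma_\ell}\int_{\Ol}N(r)\abs{f(r)}^2\,dr$. Since $\Ol=\bigcup_{g=1}^G\Omega_A(\alpha_gh_\ell)$ is covered by the $G$ subdomains, one has $1\leq N(r)\leq G$ almost everywhere on $\Ol$, whence
\[
    \norm{f}_{L_2(\Ol,\gamma_\ell)}^2\leq\sum_{g=1}^G\sum_{j,k\in\Z}\abs{\langle f,\wjklg\rangle_{L_2(\Ol,\gamma_\ell)}}^2\leq G\,\norm{f}_{L_2(\Ol,\gamma_\ell)}^2,
\]
which is exactly \eqref{eq:framedef} with admissible bounds $1$ and $G$; hence the optimal frame bounds satisfy $1\leq A\leq B\leq G$.

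The only genuinely delicate points are bookkeeping: tracking the weight $1/\gamma_\ell$ and the dilation factor through the change of variables when invoking Parseval, and verifying from \eqref{Tbed} that each $\Omega_A(\alpha_gh_\ell)$ indeed sits inside the scaled square on which the relevant exponentials form an orthonormal basis, so that the zero-extension argument is valid. The conceptual heart, and where the bounds $1$ and $G$ come from, is the counting function $N$: the lower bound reflects that every point of $\Ol$ is covered at least once, and the upper bound that there are only $G$ guide-star subdomains.
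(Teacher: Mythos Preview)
Your argument is correct and is the standard route to these results: zero-extension plus Parseval for the tight frame, and the counting/overlay function for the second frame. The paper itself does not prove this lemma but merely cites \cite[Lemma~4.2, Proposition~4.3]{Hubmer_Ramlau_2020}; your proof is essentially what one finds there. It is worth noting that your covering multiplicity $N$ is precisely the overlay function $O_\ell$ defined in \eqref{def_Ol}, which the paper subsequently uses in Lemma~\ref{sop_explicit} to compute $S_\ell$ explicitly---so your intermediate identity $\sum_{j,k,g}|\langle f,w_{jk,\ell g}\rangle|^2=\tfrac{1}{\gamma_\ell}\int_{\Omega_\ell}O_\ell|f|^2$ is exactly the statement $\langle S_\ell f,f\rangle=\langle O_\ell f,f\rangle$, anticipating that later result. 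One small caveat: the inclusion $\Omega_A(\alpha_g h_\ell)\subset c_{\ell,g}\Omega_T$ that you need for the zero-extension argument does not follow verbatim from \eqref{Tbed} as written (which uses $c_\ell=\min_g c_{\ell,g}$ rather than $c_{\ell,g}$), but this is indeed just a matter of taking $T$ large enough, as you indicate.
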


Following \cite{Hubmer_Ramlau_2020}, an FD for the atmospheric tomography operator is now given by 
    \begin{equation} \label{A_ff}  
        (A \phi)(x,y) =(2T)\sum_{\ell=1}^L \sum_{j,k\in\mathbb{Z}}\kl{\frac{\sqrt{\gamma_\ell}}{c_{\ell,g}}w_{jk}\left(\frac{\alpha_g h_\ell}{c_{\ell,g}}\right)\skp{\phi_\ell,w_{jk,\ell g}}_{L_2(\Omega_\ell,\gamma_\ell)} }_{g=1}^G w_{jk}(x,y) \,.
    \end{equation}
Furthermore, similarly as for the SVD, one may define the operator
    \begin{equation}\label{Aframeinv}
    \begin{split}
        &\mathcal{A}:L_2(\Omega_A)^G\to \prod_{\ell=1}^L L_2(\Omega_\ell,\gamma_\ell)
        \\
        &\qquad \vphi \mapsto (2T)\sum_{j,k\in \mathbb{Z}}\sum_{g=1}^G \kl{ \frac{\sqrt{\gamma_\ell}}{c_{\ell,g} \sigma_g^2}w_{jk}\left(-\frac{\alpha_g h_\ell}{c_{\ell,g}}\right)\spr{\varphi_g,w_{jk}}_{L_2(\Omega_A)^G} \widetilde{w}_{jk,\ell g}}_{\ell=1}^L \,,
    \end{split}
    \end{equation}
where $\sigma_g :=\sqrt{\sum_{\ell=1}^L \gamma_\ell\, c_\ell^{-2}}$. As in \cite{Hubmer_Ramlau_2020}, the operator $\mathcal{A}$ can be shown to be well-defined and bounded, and satisfies the following approximate solution properties.

\begin{theorem}\label{thm_frame_main}
Let the atmospheric tomography operator $A$ be as in \eqref{defA}, let $\mathcal{A}$ be defined as in \eqref{Aframeinv}, and let $\vphi \in L_2(\Omega_A)^G$. Furthermore, assume that the sequences
    \begin{equation}\label{def_al}
        a_\ell := \Kl{ \frac{\sqrt{\gamma_\ell}}{c_{\ell,g} \sigma_g^2}w_{jk}\left(-\frac{\alpha_g h_\ell}{c_{\ell,g}}\right)\spr{\varphi_g,w_{jk}}_{L_2(\Omega_A)^G} }_{jk \in \Z, g=1,\ldots,G} \,,
    \end{equation}
satisfy $a_\ell \in R(F_\ell)$ for all $\ell \in \{1,\ldots,L \}$, where $F_\ell$ denotes the frame operator corresponding to the frame $\{w_{jk,\ell g}\}_{j,k\in\mathbb{Z},g=1,\ldots,G}$. Then $\mathcal{A}\vphi$ is a solution of the equation $A \phi = \vphi$. Furthermore, among all other solutions $\psi \in \mathcal{D}(A)$ of $A\phi = \vphi$ there holds
    \begin{equation*}
        \sum_{g,\ell=1}^{G,L} \sum_{jk \in \Z} \abs{ \spr{(\mathcal{A}\vphi)_\ell,w_{jk,\ell g}}_{L_2(\Omega_\ell,\gamma_\ell)}}^2 
        \leq
        \sum_{g,\ell=1}^{G,L} \sum_{jk \in \Z} \abs{ \spr{\psi_\ell,w_{jk,\ell g}}_{L_2(\Omega_\ell,\gamma_\ell)}}^2 \,.
    \end{equation*}
On the other hand, assume that $A \phi = \vphi$ is solvable, and let $\phi^\dagger$ denote the minimum-norm solution. Then regardless of whether $a_\ell \in R(F_\ell)$ there holds
    \begin{equation*}
        \mathcal{A} \vphi = \phi^\dagger + (\Tilde{F}_\ell^* b_\ell)_{\ell = 1}^L \,, 
        \qquad \text{with} \qquad 
        b_\ell := \Kl{\kl{P_{N(B_{jk})} \phi^\dagger_{jk}) }_{\ell+(g-1)L} }_{jk \in \Z, g = 1,\ldots ,L} \,,
    \end{equation*}
where the vectors $\phi^\dagger_{jk} \in \C^{L\cdot G}$ and the matrices $B_{jk} \in C^{G \times (L \cdot G)}$ are defined by  
    \begin{equation*}
    \begin{split}
        \phi^\dagger_{jk} &:= \operatorname{vec}\Kl{ \kl{\spr{\phi^\dagger, w_{jk,\ell g}}_{L_2(\Omega_\ell,\gamma_\ell)}  }_{\ell=1}^L}_{g=1}^G \,,
        \\
        B_{jk} &:= \operatorname{diag} \Kl{ (2T)\kl{\frac{\sqrt{\gamma_\ell}}{c_{\ell,g}}w_{jk}\left(\frac{\alpha_g h_\ell}{c_{\ell,g}}\right) }_{\ell=1}^L }_{g=1}^G \,.
    \end{split}
    \end{equation*}
\end{theorem}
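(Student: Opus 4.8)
The plan is to translate both operators into frame- and Fourier-coefficient language, which decouples the problem into a family of finite-dimensional least-squares problems indexed by $(j,k)\in\Z^2$, and then to invoke the standard frame identities. The facts I would use (see \cite{Christensen_2016}) are that each analysis operator $F_\ell$ is bounded below by $\sqrt{A}$, hence injective with closed range $R(F_\ell)$; that the frame operator $S_\ell = F_\ell^\ast F_\ell$ is boundedly invertible; and that the dual-frame synthesis operator equals the Moore--Penrose inverse, $\tilde{F}_\ell^\ast = S_\ell^{-1}F_\ell^\ast = F_\ell^\dagger$. Consequently $F_\ell \tilde{F}_\ell^\ast = F_\ell F_\ell^\dagger = P_{R(F_\ell)}$ and, by injectivity, $\tilde{F}_\ell^\ast F_\ell = F_\ell^\dagger F_\ell = I$. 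The structural step is to take the $w_{jk}$-Fourier coefficient of the frame decomposition \eqref{A_ff}: for any $\phi$ the wavefront coefficients $\varphi_{jk}=(\spr{(A\phi)_g,w_{jk}}_{L_2(\Omega_A)})_{g=1}^G$ obey $\varphi_{jk}=B_{jk}\phi_{jk}$, with $\phi_{jk}$ and $B_{jk}$ exactly as in the statement; since $\abs{w_{jk}}\equiv 1/(2T)$ each block of $B_{jk}$ has squared norm $\sigma_g^2$. A direct computation of the block-diagonal pseudoinverse $B_{jk}^\dagger$ then identifies the sequence $a_\ell$ of \eqref{def_al}, up to the factor $2T$, with the stacked vectors $B_{jk}^\dagger\varphi_{jk}$, so that $(\mathcal{A}\vphi)_\ell=(2T)\tilde{F}_\ell^\ast a_\ell = F_\ell^\dagger\big((2T)a_\ell\big)$.

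With these identities the first two assertions are short. For the solution property I would apply $A$ to $\mathcal{A}\vphi$ via \eqref{A_ff}: the layer-$\ell$ frame coefficients of $(\mathcal{A}\vphi)_\ell$ are $F_\ell\tilde{F}_\ell^\ast\big((2T)a_\ell\big)=(2T)P_{R(F_\ell)}a_\ell$, which under the hypothesis $a_\ell\in R(F_\ell)$ reduce to $(2T)a_\ell$; substituting and using the normalization encoded in $\sigma_g$ telescopes each wavefront coefficient back to $\spr{\varphi_g,w_{jk}}$, i.e.\ $A\mathcal{A}\vphi=\vphi$. For the minimality claim, observe that the functional to be minimized is $\sum_{g,\ell}\sum_{jk}\abs{\spr{\psi_\ell,w_{jk,\ell g}}}^2=\sum_\ell\norm{F_\ell\psi_\ell}_{\ell_2}^2=\sum_{jk}\norm{\psi_{jk}}^2$, and that every solution $\psi$ satisfies $B_{jk}\psi_{jk}=\varphi_{jk}$ for all $(j,k)$. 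Since this sum decouples over $(j,k)$ and $B_{jk}^\dagger\varphi_{jk}$ is the minimum-norm solution of each block system, $\norm{\psi_{jk}}\ge\norm{B_{jk}^\dagger\varphi_{jk}}=\norm{F_\ell(\mathcal{A}\vphi)\text{-coeffs at }jk}$, and summation over $(j,k)$ gives the inequality.

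For the final identity I would argue, following \cite{Hubmer_Ramlau_2020}, through the projector relation $B_{jk}^\dagger B_{jk}=I-P_{N(B_{jk})}$. Since $\phi^\dagger$ solves $A\phi=\vphi$ its coefficients satisfy $B_{jk}\phi^\dagger_{jk}=\varphi_{jk}$, whence $B_{jk}^\dagger\varphi_{jk}=B_{jk}^\dagger B_{jk}\phi^\dagger_{jk}=\phi^\dagger_{jk}-P_{N(B_{jk})}\phi^\dagger_{jk}$; in unstacked form this reads $(2T)a_\ell=F_\ell\phi^\dagger_\ell - b_\ell$ with $b_\ell$ as defined. Applying the everywhere-defined bounded operator $F_\ell^\dagger=\tilde{F}_\ell^\ast$ and using $F_\ell^\dagger F_\ell\phi^\dagger_\ell=\phi^\dagger_\ell$ then produces a decomposition of the form $\mathcal{A}\vphi=\phi^\dagger+(\tilde{F}_\ell^\ast b_\ell)_{\ell=1}^L$, the correction term $\tilde{F}_\ell^\ast b_\ell$ collecting precisely the $N(B_{jk})$-components of $\phi^\dagger_{jk}$ that $\mathcal{A}$ discards; crucially, this holds regardless of the range condition, since $F_\ell^\dagger$ is defined on all of $\ell_2$.

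The main obstacle I anticipate is conceptual rather than computational: cleanly separating the two distinct notions of minimality that collide here. The vectors $B_{jk}^\dagger\varphi_{jk}$ are per-frequency minimum-norm solutions in \emph{coefficient} space, whereas $\phi^\dagger$ minimizes the $L_2$-norm in \emph{function} space, and because the frame $\{w_{jk,\ell g}\}$ is not tight (its bounds only satisfy $1\le A\le B\le G$) these two optima do not coincide; their discrepancy is exactly $\tilde{F}_\ell^\ast b_\ell$, supported on $\bigcup_{jk}N(B_{jk})$. The remaining points requiring care are keeping the vec/unstacking index $\ell+(g-1)L$ and the associated signs consistent across $\phi^\dagger_{jk}$, $B_{jk}$, and the frame coefficients, and justifying the interchange of the $(j,k)$-summation with the application of $A$, which is legitimate by the uniform bound on $\norm{B_{jk}}$ already noted in Proposition~\ref{matrixdecompA}.
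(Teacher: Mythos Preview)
Your proposal is correct and follows essentially the same route the paper has in mind: the paper does not give a self-contained proof here but simply cites \cite{Hubmer_Ramlau_2020} (Theorem~4.8 and the subsequent remark), noting that the argument carries over verbatim once the weights $\gamma_\ell$ are absorbed into the inner products; your sketch is precisely that argument, spelled out. One small bookkeeping point: your chain $B_{jk}^\dagger\varphi_{jk}=\phi^\dagger_{jk}-P_{N(B_{jk})}\phi^\dagger_{jk}$ followed by dual-frame synthesis yields $(\mathcal{A}\vphi)_\ell=\phi^\dagger_\ell-\tilde F_\ell^\ast b_\ell$, i.e.\ a minus sign rather than the plus sign appearing in the statement, so when you write up the details make sure your sign convention for $b_\ell$ is consistent with whichever form you quote.
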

\begin{proof}
These results were shown for $\gamma_\ell = 1$ in \cite{Hubmer_Ramlau_2020}; see in particular Theorem~4.8 and the subsequent remark. They can be obtained analogously for general $\gamma_\ell$. 
\end{proof}

The definition \eqref{Aframeinv} of the frame inverse $\mathcal{A}$ involves the dual frame functions $\widetilde{w}_{jk,\ell g}$, which had to be computed numerically in \cite{Hubmer_Ramlau_2020,Weissinger2021}. However, we now show that these functions $\widetilde{w}_{jk,\ell g}$ have a simple analytic expression, which in turn leads to an explicit representation of $\mathcal{A}$ that does not involve infinite sums. For this, we denote the overlay functions of the $\ell$-th atmospheric layer as
    \begin{equation}\label{def_Ol}
        O_\ell(x,y):=\sum_{g=1}^G I_{\ell g}(x,y) \,,
    \end{equation}
with $I_{\ell g}$ as in \eqref{def_Ilg}.

\begin{lemma}\label{sop_explicit}
For fixed $\ell$ let $S_\ell := F_\ell^\ast F_\ell$, where $F_\ell$ denotes the frame operator corresponding to the frame $\{w_{jk,\ell g}\}_{j,k\in\mathbb{Z},g=1,\ldots,G}$. Then for all $f\in L_2(\Omega_\ell,\gamma_\ell)$ there holds
    \begin{equation}\label{eq_Sl}
        S_\ell f (x,y)=f(x,y)O_\ell(x,y)\,.
    \end{equation}
\end{lemma}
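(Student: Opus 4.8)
The plan is to directly expand the frame operator $S_\ell = F_\ell^\ast F_\ell$ using its defining series \eqref{S_op} and exploit the fact that each frame function $w_{jk,\ell g}$ is a localized version of the orthonormal basis function $w_{jk,\ell}$ restricted to the domain $\Omega_A(\alpha_g h_\ell)$ via the indicator $I_{\ell g}$. First I would write, for arbitrary $f \in L_2(\Omega_\ell,\gamma_\ell)$,
\begin{equation*}
    S_\ell f = \sum_{g=1}^G \sum_{j,k\in\Z} \spr{f,w_{jk,\ell g}}_{L_2(\Omega_\ell,\gamma_\ell)} \, w_{jk,\ell g} \,,
\end{equation*}
and then regroup the sum by first summing over $j,k$ for each fixed guide star $g$. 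The key observation is that by \eqref{wframes} we have $w_{jk,\ell g} = w_{jk,\ell}\, I_{\ell g}$, so that
\begin{equation*}
    \spr{f,w_{jk,\ell g}}_{L_2(\Omega_\ell,\gamma_\ell)} = \spr{f I_{\ell g},w_{jk,\ell}}_{L_2(\Omega_\ell,\gamma_\ell)} \,,
\end{equation*}
since the indicator is real-valued and idempotent and can be moved onto $f$.

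The crux of the argument is then an orthonormal-basis reconstruction step applied to each guide star separately. For fixed $g$, the restricted function $f I_{\ell g}$ is supported inside $\Omega_A(\alpha_g h_\ell) \subset c_\ell \Omega_T$, and since $\{w_{jk,\ell}\}_{j,k\in\Z}$ is an orthonormal basis of $L_2(c_\ell \Omega_T,\gamma_\ell)$, the inner sum over $j,k$ reconstructs $f I_{\ell g}$ up to a second application of the indicator coming from the outer $w_{jk,\ell g}$ factor:
\begin{equation*}
    \sum_{j,k\in\Z} \spr{f I_{\ell g}, w_{jk,\ell}}_{L_2(c_\ell \Omega_T,\gamma_\ell)}\, w_{jk,\ell}\, I_{\ell g} = (f I_{\ell g}) I_{\ell g} = f\, I_{\ell g} \,,
\end{equation*}
where I use that $I_{\ell g}^2 = I_{\ell g}$. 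Summing this identity over $g$ and invoking the definition \eqref{def_Ol} of the overlay function $O_\ell = \sum_{g=1}^G I_{\ell g}$ yields $S_\ell f = \sum_{g=1}^G f\, I_{\ell g} = f\, O_\ell$, which is exactly \eqref{eq_Sl}.

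The main obstacle I anticipate is making the orthonormal-basis reconstruction step rigorous on the correct domain. One must be careful that the inner product $\spr{\cdot,\cdot}_{L_2(\Omega_\ell,\gamma_\ell)}$ appearing in $S_\ell$ agrees with $\spr{\cdot,\cdot}_{L_2(c_\ell \Omega_T,\gamma_\ell)}$ on the functions in question: this holds precisely because the extra mass lives outside the common support, so extending $f I_{\ell g}$ by zero from $\Omega_\ell$ to $c_\ell \Omega_T$ does not change the pairing against $w_{jk,\ell}$. Once this domain-compatibility point is settled, the completeness of $\{w_{jk,\ell}\}$ on $L_2(c_\ell\Omega_T,\gamma_\ell)$ does all the work, and the double indicator collapses cleanly via $I_{\ell g}^2 = I_{\ell g}$. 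Convergence of the series is unproblematic since these are orthonormal-basis expansions of an $L_2$ function, so no interchange-of-limits subtlety beyond the standard $\ell_2$-convergence of Fourier coefficients is required.
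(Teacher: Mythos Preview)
Your proposal is correct and follows essentially the same route as the paper's proof: both move the indicator $I_{\ell g}$ from $w_{jk,\ell g}$ onto $f$, zero-extend $fI_{\ell g}$ to the square domain, invoke completeness of the orthonormal basis $\{w_{jk,\ell}\}$ to reconstruct $fI_{\ell g}$, collapse $I_{\ell g}^2=I_{\ell g}$, and sum over $g$ to produce $O_\ell$. The only cosmetic difference is that the paper names the zero-extension $h$ explicitly and works on $c_{\ell,g}\Omega_T$ (consistent with the scaling in \eqref{wframes}) rather than $c_\ell\Omega_T$, but the argument is otherwise identical and your anticipated ``domain-compatibility'' point is exactly what the paper's definition of $h$ addresses.
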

\begin{proof}
Let $\ell$ and $f \in L_2(\Omega_\ell,\gamma_\ell)$ be arbitrary but fixed and for $(x,y) \in c_{\ell,g} \Omega_T$ define
    \begin{equation}\label{helper_def_h}
        h(x,y) := 
        \begin{cases}
            f(x,y)I_{\ell,g}(x,y) \,, & (x,y) \in \Omega_\ell \,,
            \\
            0 \,, & (x,y) \in (c_{\ell,g} \Omega_T) \setminus \Omega_\ell  \,.
        \end{cases}
    \end{equation}
Since $f \in L_2(\Omega_\ell,\gamma_\ell)$ it follows that $h\in L^2(\clg \Omega_T,\gamma_\ell)$, and thus
    \begin{equation}\label{eq_helper_h}
    \begin{split}
        h(x,y)&=\sum_{j,k\in\Z} \skp{h,w_{jk,\ell}}_{L_2(\clg\Omega_T,\gamma_\ell)} w_{jk,\ell}(x,y)
        \\
        &=\sum_{j,k\in\Z} \skp{f I_{\ell,g},w_{jk,\ell}}_{L_2(\Omega_\ell,\gamma_\ell)} w_{jk,\ell}(x,y) \,.
    \end{split}
    \end{equation}
Hence, using the definition \eqref{S_op} of the operator $S_\ell$ we find that for all $(x,y) \in \Omega_\ell$,
    \begin{equation*}
    \begin{aligned}
        S_\ell f(x,y)
        &=\sumg\sumjk\skp{f,\wjklg}_{L^2(\Omega_\ell,\gamma_\ell)} \wjklg(x,y) \\
        &\overset{\eqref{def_wjk_wjkl}}{=}\sumg \sumjk\skp{f,\wjkl I_{\ell g}}_{L^2(\Omega_\ell,\gamma_\ell)} \wjkl(x,y)I_{\ell g}(x,y) \\
        &=\sumg I_{\ell g}(x,y) \sumjk \skp{f I_{\ell g},\wjk}_{L^2(\Omega_\ell,\gamma_\ell)} \wjkl(x,y) \\
        &\overset{\eqref{eq_helper_h}}{=} \sumg I_{\ell g}(x,y)h(x,y) \overset{\eqref{helper_def_h}}{=} f(x,y)\sumg I_{\ell,g}(x,y)^2 \,,
    \end{aligned}
    \end{equation*}
which together with $I_{\ell,g}(x,y)^2 = I_{\ell,g}(x,y)$ and \eqref{def_Ol} yields the assertion.
\end{proof}

Using this result, we obtain an explicit expression for the functions $w_{jk,\ell g}$.

\begin{corollary}
For fixed $\ell$, let the frame functions $w_{jk,\ell g}$ be defined as in \eqref{wframes}. Then for the dual frame functions $\widetilde{w}_{jk,\ell g}$ of the frame
$\{w_{jk,\ell g}\}_{j,k\in\mathbb{Z},g=1,\ldots,G}$ there holds
    \begin{equation}\label{dualsexplicit}
        \widetilde{w}_{jk,\ell g}(x,y)=w_{jk,\ell g}(x,y)/O_\ell(x,y), \qquad \forall \, (x,y)\in \Omega_\ell \,.
    \end{equation}
\end{corollary}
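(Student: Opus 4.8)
The plan is to read off the dual frame functions directly from the definition of the dual frame together with the explicit form of the frame operator established in Lemma~\ref{sop_explicit}. By definition \eqref{dualdef}, the dual frame functions are $\widetilde{w}_{jk,\ell g} = S_\ell^{-1} w_{jk,\ell g}$, so the entire task reduces to inverting $S_\ell$ and applying the inverse to $w_{jk,\ell g}$. Since Lemma~\ref{sop_explicit} identifies $S_\ell$ with a pointwise multiplication operator, this should be immediate once the inverse multiplier is shown to be well-defined.

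First I would invoke \eqref{eq_Sl}, which states that $S_\ell$ acts on $L_2(\Omega_\ell,\gamma_\ell)$ as multiplication by the overlay function $O_\ell$, i.e.\ $(S_\ell f)(x,y) = f(x,y)\,O_\ell(x,y)$ for $(x,y)\in\Omega_\ell$. The key step is then to identify $S_\ell^{-1}$ as multiplication by $1/O_\ell$. For this to define a bounded operator, $O_\ell$ must be bounded away from zero on $\Omega_\ell$, and this is the one point requiring a short argument: by the definition $\Omega_\ell = \bigcup_{g=1}^G \Omega_A(\alpha_g h_\ell)$, every point of $\Omega_\ell$ lies in at least one of the shifted apertures, so that $1 \leq O_\ell(x,y) \leq G$ on $\Omega_\ell$. (Equivalently, this is consistent with the frame bounds $1\leq A\leq B\leq G$ of $\{w_{jk,\ell g}\}$.) Hence $1/O_\ell$ is bounded, and multiplication by $1/O_\ell$ is a two-sided inverse of multiplication by $O_\ell$, giving $S_\ell^{-1} f = f/O_\ell$ on $\Omega_\ell$.

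Applying $S_\ell^{-1}$ to $w_{jk,\ell g}$ then yields $\widetilde{w}_{jk,\ell g}(x,y) = w_{jk,\ell g}(x,y)/O_\ell(x,y)$ for all $(x,y)\in\Omega_\ell$, which is precisely \eqref{dualsexplicit}. I expect the only genuine obstacle to be the verification that $O_\ell\geq 1$ on $\Omega_\ell$, which guarantees invertibility of the multiplication operator and boundedness of $1/O_\ell$; once this is in place, the result follows immediately from Lemma~\ref{sop_explicit} and the definition \eqref{dualdef} of the dual frame, with no infinite sums remaining.
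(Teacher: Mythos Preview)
Your proposal is correct and follows essentially the same approach as the paper: both invoke the definition \eqref{dualdef} to write $\widetilde{w}_{jk,\ell g}=S_\ell^{-1}w_{jk,\ell g}$, then use Lemma~\ref{sop_explicit} together with the fact that $O_\ell$ is nonzero on $\Omega_\ell$ to invert the multiplication operator. Your version is slightly more detailed in verifying $1\leq O_\ell\leq G$ explicitly, but the argument is the same.
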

\begin{proof}
Recall that by definition \eqref{dualdef}, there holds $\widetilde{w}_{jk,\ell g}=S_\ell^{-1}w_{jk,\ell g}$. Hence, using \eqref{eq_Sl} and the fact that $O_\ell$ is nonzero on $\Omega_\ell$ directly yields the assertion.
\end{proof}

For further considerations, we also require an expression for the adjoint of $\mathcal{A}$.

\begin{lemma}
Let the atmospheric tomography operator $A$ be defined as in \eqref{defA} with the FD given in \eqref{A_ff}. Then for the adjoints $A_g^*$ of its components there holds
    \begin{equation}\label{Adjoint_frame}
    \begin{split}
        A_g^\ast:L_2(\Omega_A)&\to \mathcal{D}(A) = \prod_{\ell=1}^L L_2(\Omega_\ell,\gamma_\ell) 
        \\
        \varphi_g & \mapsto (2T)\sum_{j,k\in\mathbb{Z}} \frac{\sqrt{\gamma_\ell}}{c_{\ell,g}}\skp{\varphi_g,w_{jk}}_{L_2(\Omega_A)} w_{jk}\left(-\frac{\alpha_g h_\ell}{c_{\ell,g}}\right)w_{jk,\ell g}
    \end{split}
    \end{equation}
\end{lemma}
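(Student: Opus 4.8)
The plan is to establish the defining adjoint identity $\spr{A_g\phi,\varphi_g}_{L_2(\OA)} = \spr{\phi, A_g^\ast \varphi_g}_{\D(A)}$ directly and then read off $A_g^\ast\varphi_g$ from the resulting expression. Since the product space $\D(A)=\prod_{\ell=1}^L L_2(\Omega_\ell,\gamma_\ell)$ carries the inner product $\spr{\phi,\psi}_{\D(A)}=\sum_{\ell=1}^L\spr{\phi_\ell,\psi_\ell}_{L_2(\Omega_\ell,\gamma_\ell)}$, it suffices to identify, for each fixed $\ell$, the function $(A_g^\ast\varphi_g)_\ell\in L_2(\Omega_\ell,\gamma_\ell)$ against which $\phi_\ell$ is paired. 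The lemma ties $A_g$ to its frame decomposition \eqref{A_ff}, so this representation is the natural starting point.

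First I would insert the frame decomposition \eqref{A_ff} for the single component $A_g$ into the pairing $\spr{A_g\phi,\varphi_g}_{L_2(\OA)}=\int_{\OA}(A_g\phi)(x,y)\overline{\varphi_g(x,y)}\dx\dy$. Interchanging the convergent series with the integral — justified by the $L_2$-convergence of the frame expansion together with continuity of the inner product — turns the $\OA$-integral of $\wjk\,\overline{\varphi_g}$ into $\overline{\spr{\varphi_g,\wjk}_{L_2(\OA)}}$, yielding
\begin{equation*}
\spr{A_g\phi,\varphi_g}_{L_2(\OA)}
=(2T)\sum_{\ell=1}^L\sumjk\frac{\sqrt{\gamma_\ell}}{c_{\ell,g}}\wjk\kl{\frac{\alpha_g h_\ell}{c_{\ell,g}}}\overline{\spr{\varphi_g,\wjk}_{L_2(\OA)}}\spr{\phi_\ell,\wjklg}_{L_2(\Omega_\ell,\gamma_\ell)}\,.
\end{equation*}
Next I would move each scalar coefficient inside the second slot of the pairing $\spr{\phi_\ell,\cdot}_{L_2(\Omega_\ell,\gamma_\ell)}$. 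Because this inner product is conjugate-linear in its second argument, a coefficient $c_{jk}$ multiplying $\spr{\phi_\ell,\wjklg}$ reappears as $\overline{c_{jk}}$ multiplying $\wjklg$ inside the slot; this conjugation converts $\overline{\spr{\varphi_g,\wjk}}$ back into $\spr{\varphi_g,\wjk}$ and, using that $w_{jk}(x,y)=\tfrac{1}{2T}e^{i\om(jx+ky)}$ satisfies $\overline{\wjk(\xi)}=\wjk(-\xi)$, turns $\wjk(\alpha_g h_\ell/c_{\ell,g})$ into $\wjk(-\alpha_g h_\ell/c_{\ell,g})$. Collecting the resulting series as a single element of $L_2(\Omega_\ell,\gamma_\ell)$ and summing over $\ell$ then produces exactly $\sum_{\ell=1}^L\spr{\phi_\ell,(A_g^\ast\varphi_g)_\ell}_{L_2(\Omega_\ell,\gamma_\ell)}$ with $(A_g^\ast\varphi_g)_\ell$ equal to the claimed formula \eqref{Adjoint_frame}.

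The main obstacle is purely the bookkeeping of complex conjugates: one must keep track of the two distinct conjugations — the one built into the $L_2$-pairing and the one arising from $\overline{\wjk(\xi)}=\wjk(-\xi)$ — so that the final expression carries $\spr{\varphi_g,\wjk}$ rather than its conjugate, and $\wjk(-\alpha_g h_\ell/c_{\ell,g})$ rather than $\wjk(+\alpha_g h_\ell/c_{\ell,g})$. A secondary point is justifying the series–integral interchange, which follows from the boundedness of the frame synthesis operator and the fact that only finitely many layers $\ell$ occur, so that the double sum converges in $L_2(\OA)$. Once these are handled, the identification of $A_g^\ast\varphi_g$ is immediate, and membership of each component in $L_2(\Omega_\ell,\gamma_\ell)$ is clear since it is itself a frame expansion; as a consistency check, one may verify that summing $A_g^\ast\varphi_g$ over $g$ recovers the classical adjoint \eqref{Adjoint}.
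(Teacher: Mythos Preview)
Your proposal is correct and follows essentially the same route as the paper: insert the frame decomposition \eqref{A_ff} into $\spr{A_g\phi,\varphi_g}_{L_2(\OA)}$, swap sum and inner product, and push the scalar coefficients into the second slot of $\spr{\phi_\ell,\cdot}_{L_2(\Omega_\ell,\gamma_\ell)}$, using $\overline{\wjk(\xi)}=\wjk(-\xi)$ to produce the sign flip in the argument. The paper's own proof is terser about the conjugation bookkeeping and the series--integral interchange, but the argument is the same.
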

\begin{proof}
Using the FD \eqref{A_ff} of the atmospheric tomography operator, we obtain
    \begin{equation*}
    \begin{aligned}
        &\skp{A_g\phi,\varphi_g}_{L_2(\Omega_A)}
        \\
        &=(2T)\sum_{j,k\in\mathbb{Z}}\sum_{\ell=1}^L \frac{\sqrt{\gamma_\ell}}{c_{\ell,g}}w_{jk}\left(\frac{\alpha_g h_\ell}{c_{\ell,g}}\right)\skp{\phi_\ell,w_{jk,\ell g}}_{L_2(\Omega_\ell,\gamma_\ell)}\skp{w_{jk},\varphi_g}_{L_2(\Omega_A)}
        \\
        &=\sum_{\ell=1}^L \skp{\phi_\ell,(2T)\sum_{j,k\in\mathbb{Z}} \frac{\sqrt{\gamma_\ell}}{c_{\ell,g}}w_{jk}\left(-\frac{\alpha_g h_\ell}{c_{\ell,g}}\right)\skp{\varphi_g,w_{jk}}_{L_2(\Omega_A)}w_{jk,\ell g}}_{L_2(\Omega_\ell,\gamma_\ell)}\,,
    \end{aligned}
\end{equation*}
for all $\vphi_g \in L_2(\Omega_A)$ and $\phi \in \mathcal{D}(A)$, which directly yields the assertion.
\end{proof}

Using this representation of the adjoint atmospheric tomography operator, we can now find an explicit representation of the approximate solution operator $\mathcal{A}$.

\begin{theorem}\label{thm_Af_adj}
Let the operator $\mathcal{A}$ be defined as in \eqref{Aframeinv} and let $\varphi\in L_2(\Omega_A)^G$. Then
    \begin{equation}\label{Explicit_frameinv}
    \begin{split}
        (\mathcal{A}\varphi)(r) &= \sum_{g=1}^G\kl{ \frac{\gamma_\ell}{(c_{\ell,g}\sigma_g)^2}\varphi_g\left(\frac{r-\alpha_g h_\ell}{c_{\ell,g}}\right)I_{\Omega_A(\alpha_gh_\ell)}(r)/O_\ell(r) }_{\ell=1}^L
        \\
        &= \sum_{g=1}^G \kl{ \frac{1}{\sigma_g^2 O_\ell(r)} (A_g^* \vphi_g)(r) }_{\ell = 1}^L \,.
    \end{split}
    \end{equation}
\end{theorem}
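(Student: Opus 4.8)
The plan is to reduce \eqref{Explicit_frameinv} to a direct substitution, using two ingredients already established above: the closed form of the dual frame functions in \eqref{dualsexplicit}, and the closed form of the component adjoints in \eqref{Adjoint_frame}. Once these are in hand, the computation is pure bookkeeping and no genuine analytic work remains.

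First I would fix $\ell \in \{1,\ldots,L\}$ and a point $r=(x,y) \in \Omega_\ell$, and read off the $\ell$-th component of $\mathcal{A}\vphi$ from the definition \eqref{Aframeinv}. Inserting $\widetilde{w}_{jk,\ell g}(r) = w_{jk,\ell g}(r)/O_\ell(r)$ from \eqref{dualsexplicit}, the factor $1/O_\ell(r)$ does not depend on the summation indices $j,k,g$ and may therefore be pulled out of the double series as a common prefactor. What remains inside the sum over $g$, after also separating the weights $1/\sigma_g^2$, is exactly the series
\[
    (2T)\sum_{j,k\in\Z} \frac{\sqrt{\gamma_\ell}}{c_{\ell,g}} \skp{\vphi_g,w_{jk}}_{L_2(\Omega_A)} w_{jk}\kl{-\frac{\alpha_g h_\ell}{c_{\ell,g}}} w_{jk,\ell g}(r),
\]
which I would recognize as the $\ell$-th component $(A_g^\ast \vphi_g)_\ell(r)$ according to \eqref{Adjoint_frame}. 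This identification immediately gives the second line of \eqref{Explicit_frameinv}, namely $(\mathcal{A}\vphi)_\ell(r) = \sum_{g=1}^G (\sigma_g^2 O_\ell(r))^{-1}(A_g^\ast \vphi_g)_\ell(r)$.

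To obtain the first line, I would substitute into $(A_g^\ast \vphi_g)_\ell(r)$ its explicit pointwise representation, which is just the $\ell$-th entry of the general adjoint formula \eqref{Adjoint}, namely $(A_g^\ast \vphi_g)_\ell(r) = (\gamma_\ell/c_{\ell,g}^2)\, \vphi_g((r-\alpha_g h_\ell)/c_{\ell,g})\, I_{\Omega_A(\alpha_g h_\ell)}(r)$. Collecting the constant factors into $\gamma_\ell/(c_{\ell,g}\sigma_g)^2$ then yields the first line, completing the proof.

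I do not expect any real obstacle, since the two hard steps — deriving the closed-form dual frames (Corollary) and the closed-form adjoint (Lemma) — have already been carried out. The only point meriting a remark is the well-definedness of the pointwise division by $O_\ell$: on $\Omega_\ell$ at least one indicator $I_{\ell g}$ equals one, so $O_\ell \geq 1$ there and the quotient is unambiguous; off $\Omega_\ell$ every $w_{jk,\ell g}$ vanishes because of the indicator in \eqref{wframes}, so $(\mathcal{A}\vphi)_\ell$ is supported in $\Omega_\ell$ and no division by zero occurs. Pulling $1/O_\ell(r)$ out of the series is likewise justified, as this amounts to multiplication by the bounded measurable function $I_{\Omega_\ell}/O_\ell$, a bounded operator on $L_2(\Omega_\ell,\gamma_\ell)$ that commutes with the norm-convergent sum.
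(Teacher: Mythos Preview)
Your proposal is correct and follows essentially the same approach as the paper: both proofs combine the explicit dual frame formula \eqref{dualsexplicit} with the two representations \eqref{Adjoint} and \eqref{Adjoint_frame} of $A_g^\ast$, the only difference being that the paper starts from the adjoint identity and divides by $\sigma_g^2 O_\ell$ to reach $\mathcal{A}\vphi$, whereas you start from the definition of $\mathcal{A}\vphi$, factor out $1/(\sigma_g^2 O_\ell)$, and then recognize the adjoint. Your additional remark on the well-definedness of the division by $O_\ell$ is a welcome clarification not spelled out in the paper's proof.
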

\begin{proof}
First, note that a comparison of the expressions \eqref{Adjoint} and \eqref{Adjoint_frame} for $A_g^*$ yields
    \begin{equation*}
    \begin{split}
        &\frac{\gamma_\ell}{c_{\ell,g}^2}\varphi_g\left(\frac{r-\alpha_g h_\ell}{c_{\ell,g}}\right)I_{\Omega_A(\alpha_gh_\ell)}(r)
        \overset{\eqref{Adjoint}}{=} A_g^* \vphi_g
        \\
        &\qquad \overset{\eqref{Adjoint_frame}}{=} (2T)\sum_{j,k\in\mathbb{Z}} \frac{\sqrt{\gamma_\ell}}{c_{\ell,g}}\skp{\varphi_g,w_{jk}}_{L_2(\Omega_A)} w_{jk}\left(-\frac{\alpha_g h_\ell}{c_{\ell,g}}\right)w_{jk,\ell g}(r)
        \,.
    \end{split}
    \end{equation*}
Hence, dividing both sides by $\sigma_g^2 O_\ell(r)$ and summing over $g$, we find that 
    \begin{equation*}
    \begin{split}
        &\sum_{g=1}^G\frac{\gamma_\ell}{\sigma_g^2c_{\ell,g}^2}\varphi_g\left(\frac{r-\alpha_g h_\ell}{c_{\ell,g}}\right)I_{\Omega_A(\alpha_gh_\ell)}(r)/O_\ell(r) 
        = \sum_{g=1}^G \frac{1}{\sigma_g^2 O_\ell(r)}(A_g^* \vphi_g)(r)
        \\
        &\qquad=
        (2T)\sum_{j,k\in\mathbb{Z}}\sum_{g=1}^G \frac{\sqrt{\gamma_\ell}}{\sigma_g^2c_{\ell,g}}(\varphi_{jk})_g w_{jk}\left(-\frac{\alpha_g h_\ell}{c_{\ell,g}}\right)w_{jk,\ell g}(r)/O_\ell(r) \,.
    \end{split}
    \end{equation*}
Now, since due to \eqref{dualsexplicit} there holds
$\widetilde {w}_{jk,\ell g} = w_{jk,\ell g}/O_\ell$, the right side of the above expression equals the $\ell$-th component of $\mathcal{A}\vphi$, cf.~\eqref{Aframeinv}, which yields the assertion.
\end{proof}

In Theorem~\ref{thm_frame_main}, we have seen that the frame inverse $\mathcal{A}\vphi$ is a solution of $A \phi = \vphi$ if the sequences $a_\ell$ defined in \eqref{def_al} satisfy $a_\ell \in R(F_\ell)$ for all $\ell \in \{1,\ldots,L\}$. However, this condition is difficult to verify, in particular since the ranges $R(F_\ell)$ are non-trivial subsets of $\ell_2(\mathbb{N})$. But if $a_\ell \notin R(F_\ell)$, then in general $\mathcal{A}\phi$ can only be expected to be an approximate solution in the sense of Theorem~\ref{thm_frame_main}. A potential solution for this would be to project the frames $\{w_{jk}\}_{j,k\in\Z}$ and $\{w_{jk,\ell,g}\}_{j,k\in\Z,g=1,\ldots,G}$ onto $\overline{R(A)}$ and $N(A)^\perp$, respectively \cite{Ebner_Frikel_Lorenz_Schwab_Haltmeier_2023,Hubmer_Ramlau_Weissinger_2022}. However, to do so analytically would essentially require an explicit form of the the Moore-Penrose inverse $A^\dagger$ of $A$, which is not available. Hence, in this paper we propose a remedy in the form of the \emph{iterative FD}, given by
    \begin{equation}\label{iterative_FD}
        \phi_{k+1} = \phi_k + \mathcal{A}(\vphi - A\phi_k) \,.
    \end{equation}
This method essentially amounts to fixed point iteration applied to the modified normal $\mathcal{A}A\phi = \mathcal{A}\vphi$, which we expect to converge to a solution of the atmospheric tomography operator satisfying the range condition $a_\ell \in R(F_\ell)$. While at the moment we cannot provide a full theoretical justification of this approach, the numerical results presented below demonstrate that the iterative FD significantly improves the quality of the obtained reconstructions. This is perhaps not very surprising, given that in Theorem~\ref{thm_Af_adj} we have found that $\mathcal{A}$ corresponds to a specifically
weighted adjoint of $A$, and thus \eqref{iterative_FD} is essentially a gradient method.

In fact, the gradient methods for the atmospheric tomography problem developed in \cite{Saxenhuber_Ramlau_2016} make use of the following adapted inner product on the atmospheric layers:
    \begin{equation*}%\label{weighted_products}
        \skp{\psi,\theta}_\xi = \sum_{\ell=1}^L \frac{1}{\gamma_\ell} \int_{\Omega_\ell}O_\ell(r)  \psi_{\ell}(r) \overline{\theta_{\ell}(r)} \mathrm{d}r \,.
    \end{equation*}
The adjoint of the atmospheric tomography operator with respect to this weighted inner product then takes the form
    \begin{equation*}
        (A_\xi^\ast \vphi)(r) = \sum_{g=1}^G (A_{g,\xi}^\ast \vphi)(r) =
        \sum_{g=1}^G\kl{\frac{\gamma_\ell}{c_{\ell,g}^2}\varphi_g\left(\frac{r-\alpha_g h_\ell}{c_{\ell,g}}\right)I_{\Omega_A(\alpha_gh_\ell)}(r)/O_\ell(r) }_{l=1}^L \,.
    \end{equation*}
Comparing this with \eqref{Aframeinv}, we find that $\mathcal{A}=\sum_{g=1}^G \sigma_g^2 A_{g,\xi}^\ast$, providing another link between the iterative FD \eqref{iterative_FD} and the specific gradient methods derived in \cite{Saxenhuber_Ramlau_2016}.

% % % % % % % % % % % % % % % % 
% % Section - Numerics  % % % %
% % % % % % % % % % % % % % % % 
\section{Numerical Experiments}\label{sec:numerics}

In this section, we give some numerical verification of our developed algorithms. To this end, we consider realistic AO systems motivated by the ELT, which is currently under construction in the Atacama desert in Chile. We first describe the test configurations and simulation environment in detail, and then compare the proposed methods with state of the art algorithms in terms of reconstruction quality.

% Subsection - Test Configuration and Simulation Environment
\subsection{Test Configuration and Simulation Environment}

To evaluate the performance of the proposed algorithms, we use a design similar to that of the ELT, which will become the largest optical/near-infrared telescope in the world. The ELT will be equipped with two so-called Nasmyth platforms on each side containing different instruments. The test setting in this paper is motivated by the instrument MORFEO \cite{MORFEO}, which is an AO module operating in MCAO.

The AO system configuration is shown in Table~\ref{tab:general_setting}. We simulate a telescope which gathers light through a primary mirror with $42$~m diameter and a $28\%$ central obstruction. The ELT optical design consists of three mirrors denoted by M1, M2 and M3 on-axis with two DMs (M4, M5) for performing the AO. For MORFEO, two additional DMs (DM1, DM2) inside the instrument are used for wavefront compensation. Note that we assume the Fried geometry with equidistant actuator spacing for all DMs \cite{Fried_77}. Details on the configuration are listed in Table~\ref{tab:DM}. The turbulence is simulated according to median seeing conditions with a Fried parameter of $0.129$~m. We use the ESO standard 9-layer atmosphere as e.g.\ defined in \cite{Auzinger_2015} and reconstruct $3$ layers at the altitudes of the DMs, see Table~\ref{tab:layers}. Figure~\ref{fig:mirrors_layers} shows a graphical illustration of the DM and layer configuration. The layer heights are chosen such that the light emitted from a certain turbulent layer is exactly focused in the corresponding DM. A wavefront perturbation stemming from one of these layers, which are assumed to be infinitely thin, is then exactly compensated if the corresponding DM is given the correct shape. This special choice of the mirror position along the optical axis is often referred to as conjugation. In this way, we obtain the DM shapes directly from the reconstructed layers and do not have to perform a projection step.

\begin{table}
\footnotesize
\renewcommand{\arraystretch}{1.3}
\centering
\begin{minipage}{.49\textwidth}
	\begin{tabular}{|r|c|}
		\hline
		\textbf{Parameter} & \textbf{Value}\\\hline\hline
		Telescope diameter &  $42$~m\\\hline
		Central obstruction & $28\%$\\\hline
        Fried parameter $r_0$ & $0.129$~m\\\hline
		Na-layer height & $90$~km\\\hline
		Na-layer FWHM & $11.4$~km\\\hline
		Field of View & $2$~arcmin\\\hline
		Evaluation criterion & Strehl ratio\\\hline
		Evaluation wavelength & K band ($2200$~nm) \\\hline
	\end{tabular}
 	\caption{General system parameters.}
  	\label{tab:general_setting}
\end{minipage}
\begin{minipage}{.49\textwidth}
	\begin{tabular}{|r|c|c|c|}
		\hline
		& \textbf{M4} & \textbf{DM1} & \textbf{DM2}\\\hline\hline
		Actuators & $85\times 85$ &  $47\times 47$ & $53\times 53$\\\hline
		Altitude & $0$~km & $4$~km & $12.7$~km\\\hline
		Spacing & $0.5$~m & $1$~m & $1$~m\\\hline
	\end{tabular}
 	\caption{DM configuration.}
   	\label{tab:DM}
\end{minipage}
\end{table}

\begin{figure}[h]
    \centering
    \includegraphics[width=0.7\textwidth]{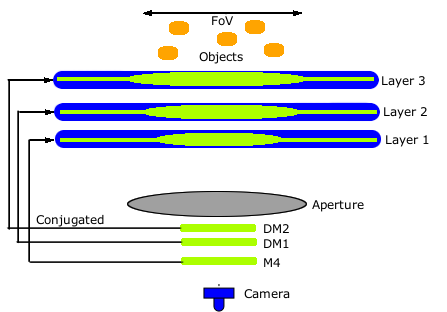}\\
    \caption{Graphical illustration of 3 DMs conjugated to 3 reconstructed layers.}
    \label{fig:mirrors_layers}
\end{figure}

We test our algorithms on two different configurations of natural guide stars (NGS) and laser guide stars (LGS) for measuring the wavefront aberrations, see Figure~\ref{fig:MCAO_asterism}. In the first test case (Figure~\ref{fig:MCAO_asterism}, left) we simulate $6$ bright NGS positioned in a circle of $1$~arcmin diameter. This setting is easier to handle, as the modelling of LGS is more complex due to their finite height. Moreover, wavefronts obtained by LGS face the problem of tip-tilt indetermination \cite{Roddier1999}, i.e., the planar component of the measured wave is unreliable. In practice, it is not realistic to find $6$ bright NGS in the surrounding of the object of interest. Thus, we consider as second test configuration (Figure~\ref{fig:MCAO_asterism}, right) the standard MORFEO setting with $6$ LGS positioned in a circle of $1$~arcmin diameter. We account for the tip-tilt effect via removing the planar component of LGS wavefronts using $3$ faint NGS located in a circle of $160$~arcsec diameter \cite{Gilles_Ellerbroeck_2008}. We model the sodium layer at which the LGS beam is scattered via a Gaussian random variable with mean altitude $H = 90$~km and FWHM of the sodium density profile of $11.4$~km. To each bright NGS and LGS, a Shack-Hartmann (SH) WFS with $84\times84$ subapertures is assigned. The faint NGS for tip-tilt removal are equipped with $2\times2$ SH WFS. The noise induced by the detector read-out is simulated as $3.0$ electrons per pixel and frame. A detector read-out noise (RON) of $3.0$ electrons per pixel per frame is used. For more details see Table~\ref{tab:WFS}.

\begin{table}
\footnotesize
\renewcommand{\arraystretch}{1.3}
\centering
\begin{minipage}{.4\textwidth}
	\begin{tabular}{|c|c|c|}
		\hline
		\textbf{Layer} & \textbf{Altitude} & \textbf{Strength}\\\hline\hline
		$1$ & $0$~m & $0.75$\\\hline
		$2$ & $4000$~m & $0.15$\\\hline
		$3$ & $12700$~m & $0.1$\\\hline
	\end{tabular}
 	\caption{Layer configuration.}
  	\label{tab:layers}
\end{minipage}
\begin{minipage}{.59\textwidth}
	\begin{tabular}{|r|c|c|c|}
		\hline
		& \textbf{LGS-WFS} & \textbf{NGS-WFS} & \textbf{TT-WFS}\\\hline\hline
		Type & SH WFS & SH WFS & SH WFS\\\hline
		Subap. & $84\times 84$ & $84\times 84$ & $2\times 2$\\\hline
		Wavelength & $589$~nm & $589$~nm & $1650$~nm\\\hline
	\end{tabular}
 	\caption{WFS configuration.}
  	\label{tab:WFS}
\end{minipage}
\end{table}

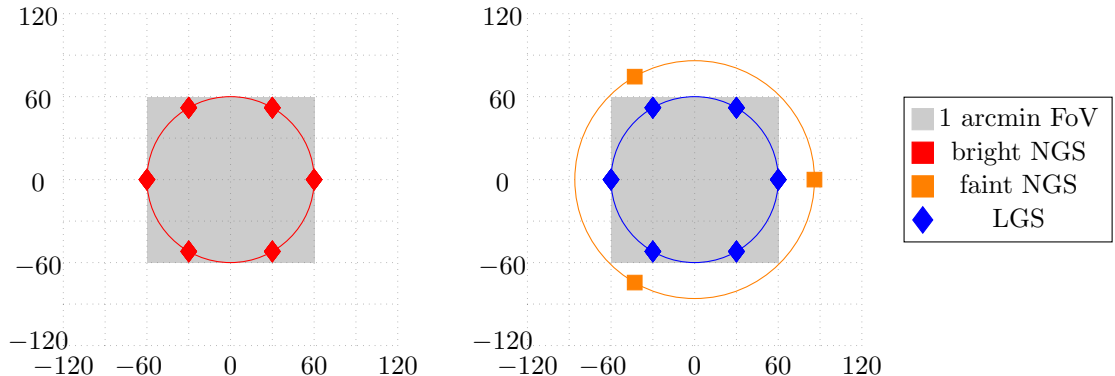
\begin{figure}[ht]
\centering
\begin{tikzpicture}[scale=1.1,font=\footnotesize]
	\fill[gray!40!white] (2,2) rectangle (4,4);
	\draw [very thin, dotted, gray, step=0.5] (0.9999,0.9999) grid (5,5);
	\draw (0.7,1.3) node[below] {$-120$};
	\draw (0.7,2.2) node[below] {$-60$};
	\draw (0.7,3.2) node[below] {$0$};
	\draw (0.7,4.2) node[below] {$60$};
	\draw (0.7,5.2) node[below] {$120$};
	\draw (1,1) node[below] {$-120$};
	\draw (1.9,1) node[below] {$-60$};
	\draw (3,1) node[below] {$0$};
	\draw (4,1) node[below] {$60$};
	\draw (5,1) node[below] {$120$};
	\draw[red] (3,3) circle (1);
	\foreach \x in {2,2.5,3,3.5,4}
	%\draw [red] plot [only marks, mark size=3.5, mark=diamond*] coordinates {(2.4,3.8) (3.6,3.8) (2,3) (4,3) (2.4,2.2) (3.6,2.2)};
    \draw [red] plot [only marks, mark size=3.5, mark=diamond*] coordinates {(2.5,3.866) (3.5,3.866) (2,3) (4,3) (2.5,2.134) (3.5,2.134)};
	\end{tikzpicture}\hspace{0.3cm}
 \begin{tikzpicture}[scale=1.1,font=\footnotesize]
	\fill[gray!40!white] (2,2) rectangle (4,4);
	\draw [very thin, dotted, gray, step=0.5] (0.9999,0.9999) grid (5,5);
	\draw (0.7,1.3) node[below] {$-120$};
	\draw (0.7,2.2) node[below] {$-60$};
	\draw (0.7,3.2) node[below] {$0$};
	\draw (0.7,4.2) node[below] {$60$};
	\draw (0.7,5.2) node[below] {$120$};
	\draw (1,1) node[below] {$-120$};
	\draw (1.9,1) node[below] {$-60$};
	\draw (3,1) node[below] {$0$};
	\draw (4,1) node[below] {$60$};
	\draw (5,1) node[below] {$120$};
	\draw[blue] (3,3) circle (1);
	\draw[orange] (3,3) circle (1.4333);
	\foreach \x in {2,2.5,3,3.5,4}
	%\draw [blue] plot [only marks, mark size=3.5, mark=diamond*] coordinates {(2.4,3.8) (3.6,3.8) (2,3) (4,3) (2.4,2.2) (3.6,2.2)};
    \draw [blue] plot [only marks, mark size=3.5, mark=diamond*] coordinates {(2.5,3.866) (3.5,3.866) (2,3) (4,3) (2.5,2.134) (3.5,2.134)};
	%\draw [orange] plot [only marks, mark size=2.5, mark=square*] coordinates {(2.5,4.333) (2.5,1.633) (4.433,3)};
    \draw [orange] plot [only marks, mark size=2.5, mark=square*] coordinates {(2.283,4.241) (2.283,1.759) (4.433,3)};
	\begin{customlegend}[
	legend entries={
		$1$ arcmin FoV,
		bright NGS,
        faint NGS,
		LGS,
	},
	legend style={at={(8,4)}}]
	\addlegendimage{only marks, mark=square*, color=gray!40!white, mark size=4}
	\addlegendimage{only marks, mark=square*, color=red, mark size=4}
    \addlegendimage{only marks, mark=square*, color=orange, mark size=4}
	\addlegendimage{only marks, mark=diamond*, color=blue, mark size=5}
	\end{customlegend}
	\end{tikzpicture}
	\caption{Different configurations of bright (red) and faint (orange) NGSs and LGSs (blue). The $2$ arcmin FoV is marked in gray.}
	\label{fig:MCAO_asterism}
\end{figure}

We examine the performance of the proposed methods against the Gradient method \cite{Saxenhuber_Ramlau_2016,Saxenhuber_2016} and the Finite Element Wavelet Hybrid Algorithm (FEWHA) \cite{Yudytskiy_2014,Yudytskiy_Helin_Ramlau_2014}, both being iterative reconstruction algorithms for atmospheric tomography. FEWHA uses a dual-domain discretization strategy into wavelet and bilinear basis functions leading to sparse operators. A matrix-free representation of all operators involved makes FEWHA very fast and enables on-the-fly system updates whenever parameters at the telescope or in the atmosphere change \cite{Stadler2020,Stadler2021}. The sparse system is solved using the CG method with a Jacobi preconditoner \cite{Yudytskiy_Helin_Ramlau_2013} and an augmented Krylov subspace method \cite{RaSt2021,Stadler2021} to reduce the number of iterations. The algorithm leads in real-time to an excellent reconstruction quality compared to standard matrix-vector multiplication approaches for the MORFEO instrument \cite{Stadler2021,Stadler2022}. The Gradient method amounts to a steepest descent method applied to a least squares functional, employed with an accelerated step-size, developed and analyzed in \cite{Saxenhuber_Ramlau_2016,Saxenhuber_2016}, here applied with a separated tip-tilt reconstruction in the mixed case. 

In the community of AO, it is common to validate the reconstruction quality using the so called Strehl ratio into certain directions \cite{Roddier1999}. In our simulations, we use $25$ directions positioned in a $5\times 5$ grid over the FoV. The Strehl ratio is defined as the ratio between the maximum of the real energy distribution of incoming light in the image plane $I(x,y)$ over the hypothetical distribution $I_D(x,y)$, which stems from the assumption of diffraction-limited imaging, i.e.,
    \begin{equation*}
        \text{SR}:= \frac{\max_{(x,y)}I(x,y)}{\max_{(x,y)}I_D(x,y)} \,.
    \end{equation*}
By definition, the Strehl ratio is between $0$ and $1$ and frequently given in percent. A Strehl ratio of $1$ means that the influence of the atmosphere has been removed from the observation. For its numerical evaluation the Marechal criterion is used \cite{Roddier1999}.

For our SVTD method summarized in Algorithm~\ref{alg:SVTD}, we used $s=1$ for the Sobolev index, which is slightly smaller than the choice $s=11/6$ corresponding to the expected smoothness of a typical atmosphere \cite{Kolmogorov}. This was done to avoid the commonly observed oversmoothing effect of the adjoint embedding operator \cite{Hubmer_Sherina_Ramlau_2023,Ramlau_Teschke_2004_1}.

All simulations have been carried out in the internal and entirely MATLAB-based AO simulation tool MOST \cite{Auzinger_2017}, which has been developed by the Austrian Adaptive Optics team as an alternative to OCTOPUS \cite{LeLouarn2006,Octopus}, the end-to-end simulator of the European Southern Observatory. The performance of the algorithms is evaluated using the Strehl ratio in the K band, i.e., at a wavelength of $2200$~nm.

% Subsection - Numerical Results
\subsection{Numerical Results}

As a first step of performance evaluation, we consider not a whole end-to-end AO simulation, but compare only the tomographic reconstruction of the proposed methods. We simulate a $3$ layer atmosphere (Figure~\ref{fig:atmosphere}, top row) and perform its reconstruction with the SVTD (Figure~\ref{fig:atmosphere}, middle row) and the iterative FD (Figure~\ref{fig:atmosphere}, bottom row) methods. We observe that both algorithms provide a very good result at the lowest layer $\ell=1$, but fail to reconstruct details at higher altitudes, in particular around the borders. This behaviour is expected, and generally observed for atmospheric tomography, since the reconstruction quality depends on the number of overlapping regions, which decreases with height and with a larger distance from the center, which is often referred to as field off-axis position \cite{Ramlau_Stadler_2024}.

\begin{figure}[h]
    \centering
    \includegraphics[width=0.75\textwidth]{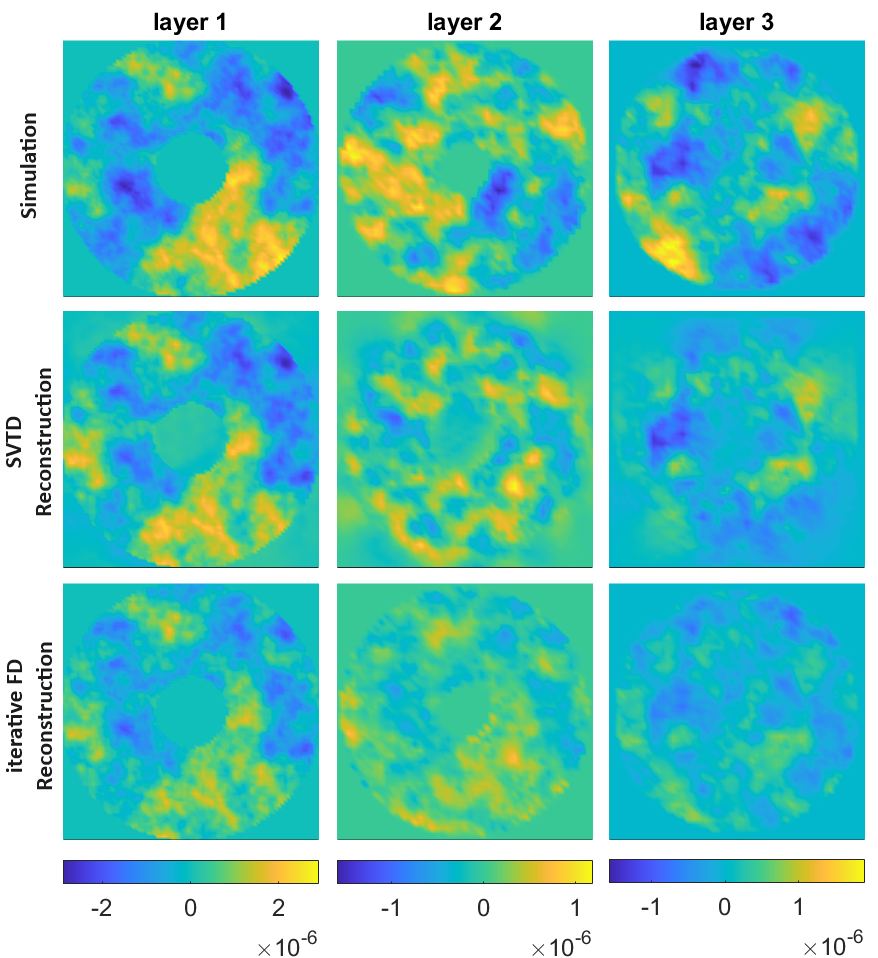}\\
    \caption{A simulated 3 layer atmosphere (top), and its reconstruction with SVTD (middle) and iterative FD (bottom).}
    \label{fig:atmosphere}
\end{figure}

Since all algorithms yield promising tomographic reconstructions, we next consider full end-to-end AO simulations and evaluate the short-exposure (SE) and long-exposure (LE) Strehl ratio. The SE Strehl ratio provides a measure for the reconstruction quality at the current time step, whereas the LE Strehl ratio is an average over time \cite{Roddier1999}. We compare the SVTD and FD approaches with FEWHA and the Gradient method, which are both known to provide very good results for the considered test configurations. For all results with the SVTD method, the regularization parameter $\alpha$ and the smoothing parameter $s$ were tuned experimentally a-priori. The results for the iterative FD and the Gradient method use $5$ iterations, whereas FEWHA uses $4$ iterations in its internal CG method. The method specific parameters for FEWHA have been tuned via simulations.

First, we consider the NGS-only case as illustrated in the left plot of Figure~\ref{fig:MCAO_asterism}. We simulate $1000$ time steps, corresponding to a $2$ seconds interval.  In Figure~\ref{fig:methods1000it_NGS}, we show the average SE Strehl ratio over the $25$ evaluation directions over time (left) and the LE Strehl ratio versus the field off-axis position (right). Note that it is a well known issue for MCAO systems that the quality of the AO correction degrades if we move further away from the center, see e.g. \cite{Ramlau_Stadler_2024}. We observe that all methods except the FD provide very good results, which are stable over time. In particular, the SVTD can compete with the Gradient method and FEWHA, even outperforming them in the center of the FoV. This also becomes evident when looking at the SE Strehl ratio at specific directions depicted Figure~\ref{fig:methods1000it_NGS_directions}. The reason for the suboptimal performance of the FD is likely due to the assumptions on the sequences $a_\ell$ in Theorem~\ref{thm_frame_main} not being satisfied. Consistently with the discussion at the end of Section~\ref{sec:fd}, the iterative FD performs much better than the FD itself, with results comparable to those of the Gradient method. Note that we did not focus on tuning the step-size in the iterative FD approach, which could possibly yield a performance improvement. 

\begin{figure}[h]
    \centering
    % [trim={left bottom right top},clip]
    \includegraphics[width=.49\textwidth,trim={1.1cm 0 1.5cm 0},clip]{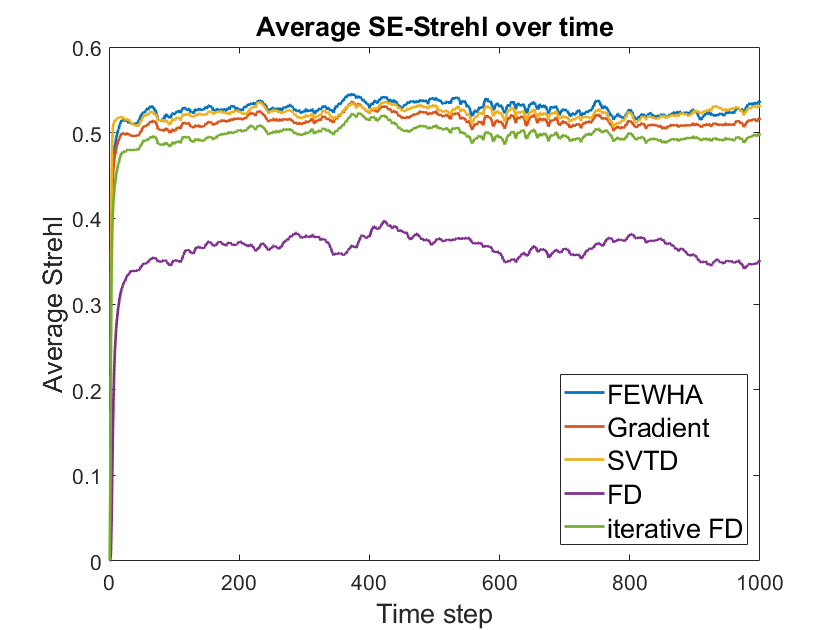}
    \includegraphics[width=.49\textwidth,trim={0.8cm 0 1.7cm 0},clip]{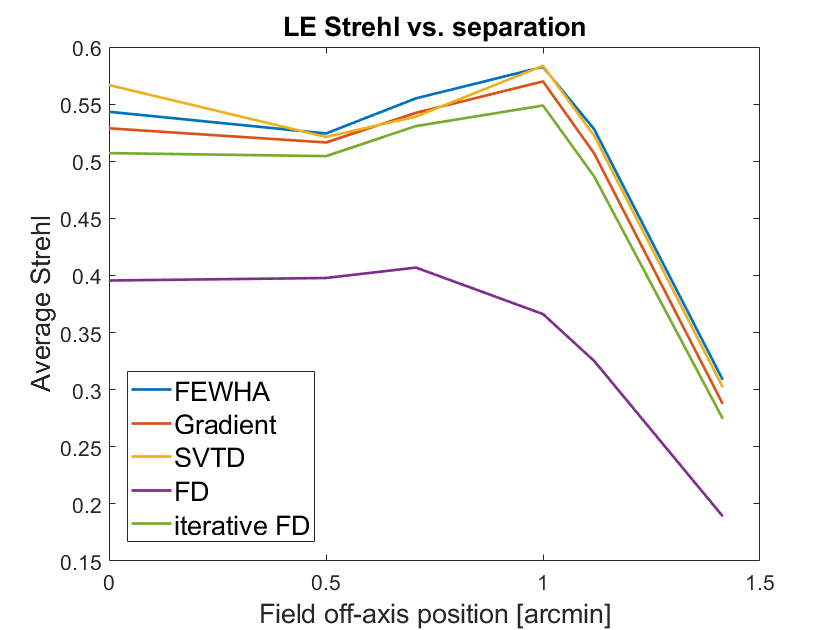}
    \caption{NGS-only case: Average SE Strehl ratio over time (left) and LE Strehl ratio vs. separation (right).}
    \label{fig:methods1000it_NGS}
\end{figure}

\begin{figure}[h]
    \centering
    % [trim={left bottom right top},clip]
    \includegraphics[width=\textwidth,trim={0.8cm 0 1.5cm 0},clip]{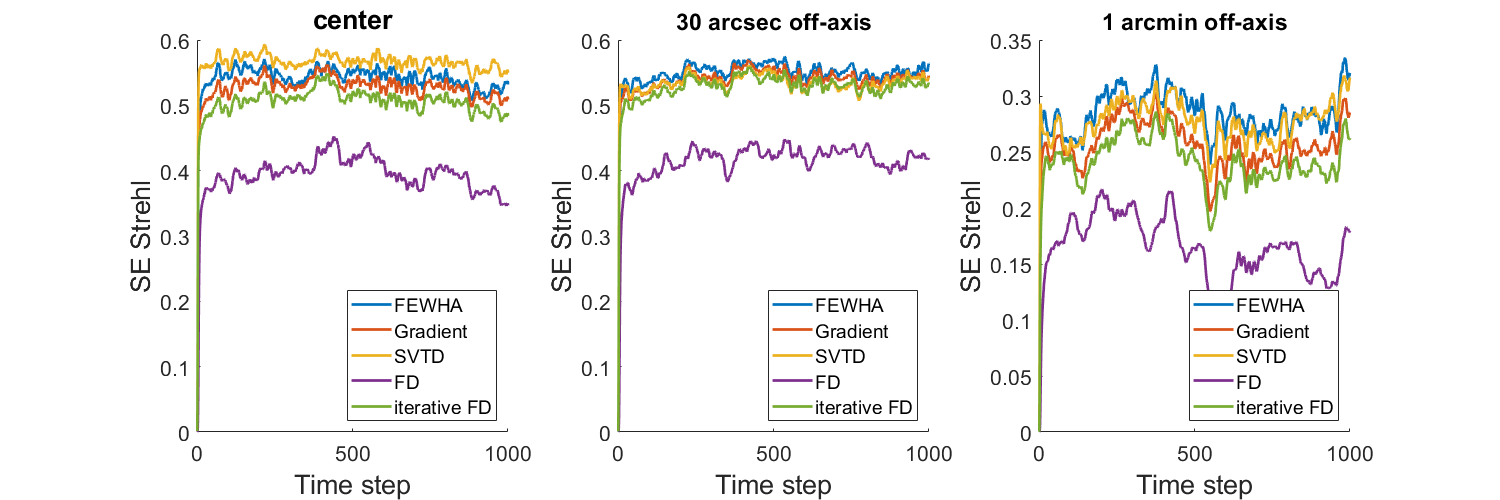}
    \caption{NGS-only case: SE Strehl ratio over time in the center (left), $30$ arcsec off-axis (middle) and $1$ arcmin off-axis (right).}
    \label{fig:methods1000it_NGS_directions}
\end{figure}

Next, we consider the more realistic mixed case in which $6$ LGS are combined with $3$ NGS for tip-tilt removal; cf.\ the right plot of Figure~\ref{fig:MCAO_asterism} for an illustration of the star asterism. We observe from Figure~\ref{fig:methods1000it_mixed} and Figure~\ref{fig:methods1000it_mixed_directions} that FEWHA provides the best result in all directions considered over the whole simulation duration. In general, all algorithms yield more unstable results, i.e., oscillations of the SE Strehl ratio over time, especially for the outer directions. This is expected when moving from an NGS-only setting to a more realistic setting with LGS and faint NGS.  

\begin{figure}[h]
    \centering
    % [trim={left bottom right top},clip]
    \includegraphics[width=.49\textwidth,trim={0.8cm 0 1.5cm 0},clip]{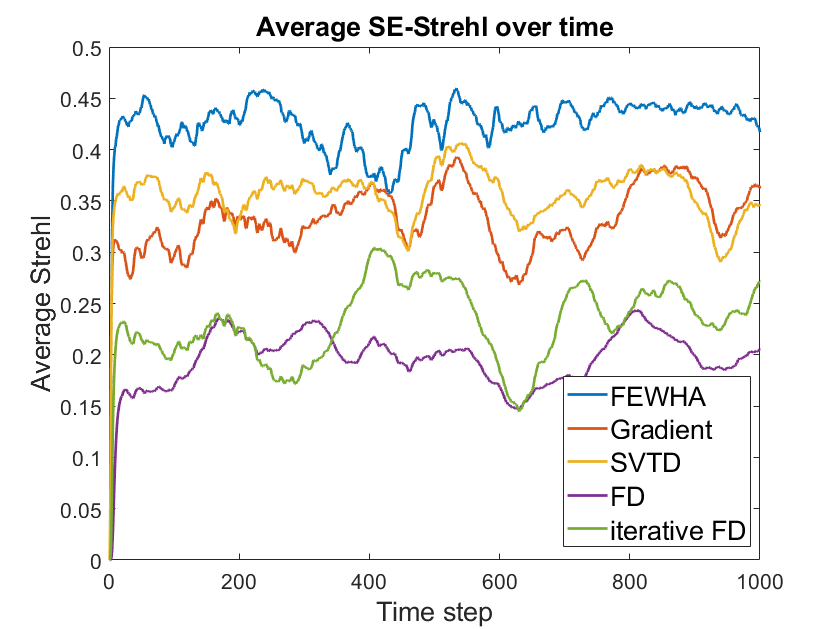}
    \includegraphics[width=.49\textwidth,trim={0.8cm 0 1.7cm 0},clip]{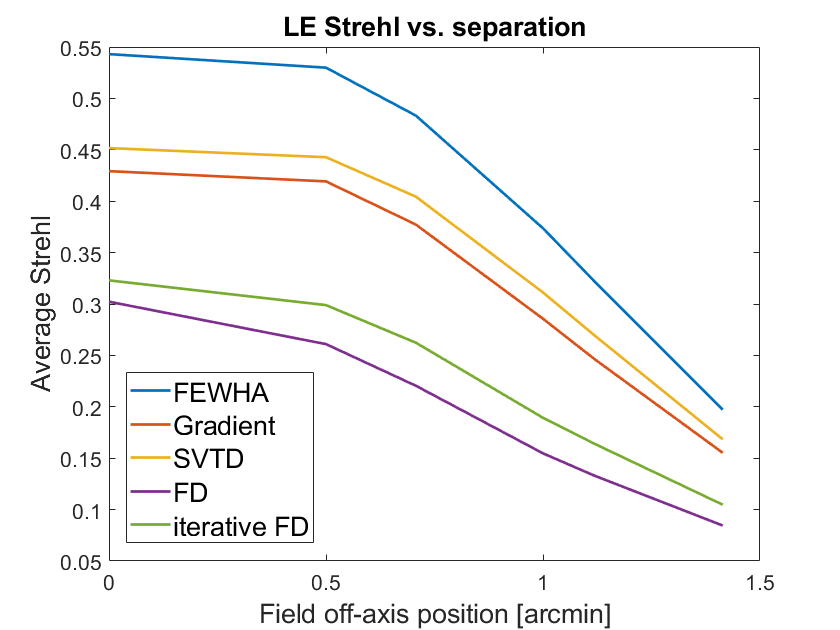}
    \caption{Mixed case: Average SE Strehl ratio over time (left) and LE Strehl ratio vs. separation (right).}
    \label{fig:methods1000it_mixed}
\end{figure}

\begin{figure}[h]
    \centering
    % [trim={left bottom right top},clip]
    \includegraphics[width=\textwidth,trim={0.8cm 0 1.5cm 0},clip]{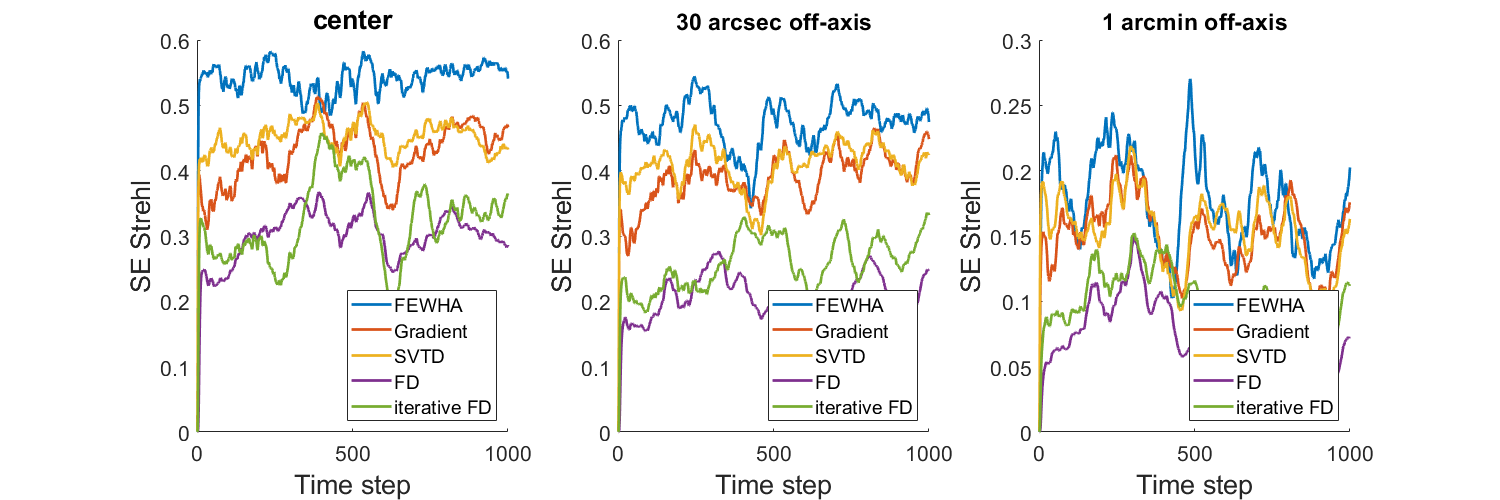}
    \caption{Mixed case: SE Strehl ratio over time in the center (left), $30$ arcsec off-axis (middle) and $1$ arcmin off-axis (right).}
    \label{fig:methods1000it_mixed_directions}
\end{figure}

In terms of computational speed, the MATLAB implementations of all tested methods require a similar run-time, in the order of a few seconds for a single time step. Note that there exists a parallel and matrix-free implementation of FEWHA in C++, which takes only a few milliseconds per time step and thus is able to fulfill the real-time requirements of the ELT \cite{Stadler2020}. The FEWHA MATLAB implementation is matrix-based, non parallel and not optimized for speed, and thus much slower.

% % % % % % % % % % % % %
% Section - Conclusion  %
% % % % % % % % % % % % %
\section{Conclusion} 

In this paper, we considered singular value and frame decompositions of the atmospheric tomography operator. First, we extended existing SVDs for the periodic atmospheric tomography operator to a more realistic Sobolev space setting including weighted inner products which incorporate known or measured turbulence profiles. Then, we considered an FD for the non-periodic atmospheric tomography operator, and derived an explicit representation of the corresponding (approximate) frame inversion operator. Based on these theoretical results, we then developed efficient numerical solution methods for the atmospheric tomography problem, which we implemented and tested in the AO simulation tool MOST. The obtained results, especially those for the SVTD, are very promising, and warrant further investigation. In particular, we plan to implement the SVTD method in non-MATLAB based AO simulation environments, and to leverage its low computational cost and parallelizability to compete with the FEWHA algorithm not only in terms of reconstruction quality as demonstrated above, but also in essential real-time requirements.

% % % % % % % % % % %
% Section - Support %
% % % % % % % % % % %
\section*{Acknowledgement}

The authors thank Andreas Obereder and Stefan Raffetseder for helpful input. This research was funded in part by the Austrian Science Fund (FWF) SFB 10.55776/F68 ``Tomography Across the Scales'', project F6805-N36 (Tomography in Astronomy). For open access purposes, the authors have applied a CC BY public copyright license to any author-accepted manuscript version arising from this submission. Furthermore, the authors were supported by the Austrian Research Promotion Agency (FFG) project number FO999888133, as well as the NVIDIA Corporation Academic Hardware Grant Program. LW is partially supported by the State of Upper Austria. 

\printbibliography

%===============================================================================================

\end{document}